%
%
%

\documentclass[graybox]{svmult}


\usepackage{mathptmx}     
\usepackage{helvet}   
\usepackage{courier}
\usepackage{amsfonts} 
\usepackage{amssymb}
%
\usepackage{makeidx}         
\usepackage{graphicx}        
\usepackage{multicol}        
\usepackage[bottom]{footmisc}


\makeindex             


\begin{document}

\title*{Partitions for semi-magic squares of size three}
\author{Robert W. Donley, Jr.}
\institute{Robert W. Donley, Jr., \at Queensborough Community College, Bayside, New York, \email{RDonley@qcc.cuny.edu}}
%
%
\maketitle

\abstract*{n the theory of Clebsch-Gordan coefficients, one may recognize the domain space as the set of weakly semi-magic squares of size three.  Two partitions on this set are considered: a triangle-hexagon model based on top lines, and  one based on the  orbits under a finite group action. In addition to giving another proof of McMahon's formula, we give a generating function that counts the  so-called trivial zeros of Clebsch-Gordan coefficients and its associated quasi-polynomial.}

\abstract{In the theory of Clebsch-Gordan coefficients, one may recognize the domain space as the set of weakly semi-magic squares of size three.  Two partitions on this set are considered: a triangle-hexagon model based on top lines, and  one based on the  orbits under a finite group action. In addition to giving another proof of McMahon's formula, we give a generating function that counts the  so-called trivial zeros of Clebsch-Gordan coefficients and its associated quasi-polynomial.}

\keywords{Clebsch-Gordan coefficient, semi-magic square, stochastic matrix}

\section{Introduction}
\label{sec:1}

 A basic open problem of elementary representation theory concerns the classification of zeros for Clebsch-Gordan coefficients of $SU(2).$  Regge made the remarkable observation that  the domain space for Clebsch-Gordan coefficients corresponds precisely to weakly semi-magic of squares of size three and that, suitably normalized, the associated group of symmetries of the determinant  acts upon values of Clebsch-Gordan coefficients by sign changes.   In particular, these symmetries preserve the zero locus for Clebsch-Gordan coefficients.

In the notation of \cite{Do}, a  parametrization for the domain of $c_{m, n, k}(i, j)$ is given by the set of semi-magic squares
\begin{equation}\left|\left|\begin{array}{ccc} \phantom{-}n-k & \phantom{-}m-k & \phantom{-}k \\ \phantom{-}i &\phantom{-} j &\phantom{-} m+n-i-j-k\ \\ \phantom{-}m-i & \phantom{-}n-j & \phantom{-}i+j-k\end{array}\right|\right|\end{equation}  with non-negative integer entries. In particular, the Clebsch-Gordan coefficients for a fixed top line record all data for the projection
\[V(m)\otimes V(n)\rightarrow V(m+n-2k). \]
The bottom rows sum according to the projection and may be considered as exponent pairs.  With respect to the natural invariant bilinear form,  the weight vectors
\[ f^i\phi_m\quad{\rm and}\quad f^{m-i}\phi_m\]
 pair non-trivially.
 
The classification of  zeros for Clebsch-Gordan coefficients typically starts by extracting the so-called trivial zeros; these zeros arise from certain fixed points of Regge symmetries, and, as we will show, display a measurable regularity.  The remaining zeros may be further stratified by magic number $J$, the minimal entry of the magic square (the degree), and the order; see \cite{RRV} for a definition of the latter item and for results on non-trivial zeros with $J\le 900$ and beyond.
 
 Our  strategy for enumerating non-trivial zeros is as follows:
 \begin{itemize}
 \item for a fixed $J$, elementary MAPLE programming creates the list of all zeros as magic squares.  Output consists of seven integers - the square (five parameters), the minimal entry in the square, and the determinant of the square,
 \item for odd $J$, there are trivial zeros; the associated magic squares have determinant zero, but non-trivial zeros may also have this property,
 \item such non-trivial zeros are detected by comparing counts for determinant zero with the generating function that counts trivial zeros; these zeros are sparse and readily found, and 
 \item to both enumerate orbits and count orbit sizes, representatives for non-trivial zeros are narrowed down by sorting the list in EXCEL using determinant and smallest value.
 \end{itemize}

In this work, we consider results on enumerating trivial zeros as a proof of concept for techniques associated to two partitions.  Sections 3 through 5 develop a triangle-hexagon model; another proof of McMahon's enumeration of semi-magic squares of size three is given in Section 3 (Proposition 3), and Sections 4 and 5 provide further examples. Sections 6 though 8 consider orbit-based partitioning. The main results are the enumeration of reduced squares in Table 1, the generating function that counts trivial zeros in Section 7 (Theorem 1), and the corresponding quasi-polynomial for trivial zeros in Section 8 (Theorem 2).  Section 9 gives a detailed application of the theory to the example of $J=15.$

\section{Magic squares and Clebsch-Gordan coefficients}
\label{sec:2}

\begin{definition}
Let $\mathbb{M}_3$ denote the monoid of weakly semi-magic squares of size three; that is, each element of $\mathbb{M}_3$  is of the form
\begin{equation}
M=\left|\left|
\begin{array}{ccc}
 \, a &\ b &\  k\, \\
\,  r & \ * &\ *\, \\
\,  * & \ * & \ c\, \\
\end{array}\right|\right|
\end{equation}
such that 
\begin{itemize}
\item all entries are nonnegative integers, and
\item all line sums along rows and columns are equal.

\end{itemize}
\item This sum, the {\bf magic number}, equals $J=a+b+k.$ Such matrices are also called {\bf integer doubly-stochastic matrices}.
\end{definition}

\begin{definition}
Fix $J\ge 0.$  Let $\mathbb{M}_3(J)$ denote the subset in $\mathbb{M}_3$ of all semi-magic squares with magic number $J.$  Define $H_3(J)$ to be the cardinality of $\mathbb{M}_3(J).$
\end{definition}

Of course, when $J>0,$ any $M$ in $\mathbb{M}_3(J)$ becomes a proper doubly-stochastic matrix with rational entries upon division by $J.$  The earliest formula for $H_3(J)$ is due to MacMahon \cite{PMM}; see \cite{St2}, and Proposition 3 below.  For a general overview of properties of $H_n(J)$, see Ch.1 of \cite{St1}.  An extensive presentation of the theory of doubly-stochastic matrices with respect to composite quantum systems may be found in \cite{Lo2}.

With this parametrization, the corresponding Clebsch-Gordan coefficients belong to the tensor product
\[ V(b+k)\otimes V(a+k)\rightarrow V(a+b);\]
each highest weight is the pairwise sum of elements in the top line.

\begin{definition}  The 
Clebsch-Gordan function
\[
C:  \mathbb{M}_3\rightarrow \mathbb{Z}
\]
is the map 
$M=
\left|\left|\begin{array}{ccc}
 a & \ b &\  k\\
 r & \ s &\  *\\
\,  * & \ * &\  c\\
\end{array}\right|\right| \mapsto$
\[
C(M)=\sum\limits_{l=0}^k (-1)^l  \left(\begin{array}{c} c \\ r-l \end{array}\right)  \left(\begin{array}{c} b+k-l\\ b\end{array}\right) \left(\begin{array}{c} a+l\\ a\end{array}\right).
\]
\end{definition}

Here $s=c-r+k,$ and
$C(M)$ is the coefficient of 
$$x^{s}y^r$$
 in the power series expansion of 
\[\frac{(x+y)^c}{(1-x)^{b+1}(1+y)^{a+1}}.\]

\section{A partition for semi-magic squares of size three}
\label{sec:3}

For square matrices of size three, let $G$ be the group of determinantal symmetries;  that is,
\begin{itemize}
\item  $G$ is generated by row switches, column switches, and transpose,
\item  every element $g$ of $G$ may be expressed uniquely as  
\begin{equation}g=R(\sigma)C(\tau)T^\epsilon\quad {\rm with}\quad \sigma, \tau \in S_3,\ \epsilon\in\{0, 1\}\end{equation}and
\item  $|G|=72.$
\end{itemize}

In the above factorization, $R(\sigma)$ denotes a permutation of rows, $C(\tau)$ a permutation of columns, and $T$ the transpose operation. For basic group theoretic properties of these symmetries, such as conjugacy classes, normal subgroups, and character table, see \cite{RVR}.

These symmetries preserve 
\begin{enumerate}
\item the semi-magic square property, 
\item the magic number $J$ and thus each ${M}_3(J)$, and 
\item the zero locus for Clebsch-Gordan coefficients. 
\end{enumerate}

The classification of zeros of Clebsch-Gordan coefficients has traditionally been a subject of intense study, largely focused in the mathematical physics literature, and remains an open problem.  In the present work and preceding issues (\cite{Do}, \cite{Do2}, \cite{Do3}), one goal has been to reconstruct parts of the general theory using combinatorial methods.   A goal of this work is to give a natural visual representation of the domain space. That is, we  parametrize this subset of semi-magic squares as
\begin{equation}
\left|\left|\begin{array}{ccc}
\,  a &\ b &\  k\ \\
\, r & \ * & \ *\ \\
\, * & \ * & \ c\ \\
\end{array}\right|\right|  \subset \mathbb{N}^5
\end{equation}
in terms of a triangle-hexagon model;  generally, each lattice point in a three-dimensional triangular cone represents a hexagonal family of magic squares.

The partitioning of this set occurs in three steps: 

\begin{itemize}
\item  fix a magic number $J \ge 0,$
\item  organize the compositions of $J$ with three parts into an equilateral triangle of size $J+1$, and 
\item  organize all magic squares with a fixed top line, as a composition of $J,$ into a possibly degenerate hexagon.
\end{itemize}
In this way, the data attached to $$V(b+k)\otimes V(a+k) \rightarrow V(a+b)$$ is indexed by all magic squares with top line
 $(a, b, k).$

To organize the top lines for a fixed $J\ge 0,$ we index the first two entries of the triple as usual for matrices, but starting at index zero.  These triples may then be arranged as an equilateral triangle as noted.  For any top line in the triangle, the entries denote the number of rows above, the number of spaces to the left, and the number of spaces to the right, respectively,

To further analyze the set of magic squares associated to a top line, we consider $(r, c)$ as coordinates in a square matrix of size $J+1$ in which the first row and column  are indexed by zero.  By the magic square property, $r$ and $c$ are subject to the inequalities
\begin{equation}0\le r \le b+k,\qquad 0\le c \le a+b,\qquad -k\le c-r \le a.\end{equation}
The solution set of these inequalities may be interpreted as a rectangle with two congruent right triangles removed from the upper right and lower left corners.   Since $(r, c)=(0, 0)$ is always a solution of these inequalities, the solution set always has one vertex in the upper left corner.  

Furthermore the shape of the solution set is determined by the following  side lengths:
\begin{itemize}
\item vertical side:  $k+1,$
\item diagonal side: $b+1,$
\item horizontal side: $a+1.$
\end{itemize}
If exactly one of $a, b,$ or $k$ is zero, the solution set is a parallelogram, and  we obtain a line segment if two parameters equal zero; otherwise, the solution set is a hexagon with parallel opposing sides. Proposition 2 below follows directly.

Considering the characterization by side lengths, we see that  a  permutation of columns permutes the entries of a given top line, with the effect that the corresponding hexagon has the same shape in a new orientation.
 
Finally, we note formulas for enumerating the various objects of interest:
\begin{proposition} The number of top lines associated to a fixed $J$ is given by
\begin{equation}1+2+3+ \dots + (J+1) = \frac{(J+1)(J+2)}2.\end{equation}
\end{proposition}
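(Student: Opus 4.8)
The plan is to identify each ``top line'' with a composition of $J$ into three nonnegative parts and then to count these compositions by slicing along the rows of the equilateral triangle. Since the magic number satisfies $J = a + b + k$, every top line is a triple $(a, b, k) \in \mathbb{N}^3$ with $a + b + k = J$, and conversely each such triple does arise: the cyclic square with successive rows $(a, b, k)$, $(k, a, b)$, $(b, k, a)$ has all line sums equal to $J$ and nonnegative integer entries, so it lies in $\mathbb{M}_3(J)$ with the prescribed top line. Thus counting top lines is exactly counting these triples.

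First I would group the triples by their first coordinate $a$, which is precisely the row index of the triangle (the ``number of rows above''). Fixing $a = i$ with $0 \le i \le J$ reduces the constraint to $b + k = J - i$ with $b, k \ge 0$; here $b$ may range freely over $0, 1, \dots, J - i$ and $k = J - i - b$ is then forced, giving exactly $J - i + 1$ admissible pairs. This is the assertion that the $i$-th row of the triangle holds $J - i + 1$ entries.

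Summing over all rows and reindexing by $j = J - i + 1$ yields
\begin{equation}
\sum_{i=0}^{J} (J - i + 1) = \sum_{j=1}^{J+1} j = 1 + 2 + \cdots + (J+1),
\end{equation}
which recovers the left-hand side of the claimed identity. It then remains to evaluate this arithmetic series; pairing the $j$-th term with the $(J+2-j)$-th term in Gauss's manner gives $(J+1)(J+2)/2$, the right-hand side. Equivalently, the total is the stars-and-bars count $\left(\begin{array}{c} J+2 \\ 2 \end{array}\right)$ of placing two bars among $J$ stars.

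There is no real obstacle, as the argument is wholly elementary; the only point deserving attention is to honor the two-part structure of the displayed equation, verifying both that the row decomposition produces the running sum $1 + 2 + \cdots + (J+1)$ and that this sum closes to $(J+1)(J+2)/2$, so that the formula is established exactly as stated.
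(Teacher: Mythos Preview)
Your argument is correct and matches the paper's intended approach: the paper does not give a separate proof of this proposition but simply records it as the row-count of the equilateral triangle of compositions described just above, and your writeup makes that count explicit. Your extra remark that every triple $(a,b,k)$ with $a+b+k=J$ genuinely occurs as a top line (via the cyclic square) is a nice detail the paper leaves tacit.
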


\begin{proposition} Let $|(a,b,k)|$ denote the number of semi-magic squares with top line $(a, b, k)$.  Then 
\begin{equation} |(a, b, k)| =1 + (a + b + k) + (ab + ak + bk).
\end{equation}
\end{proposition}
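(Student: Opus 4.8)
The plan is to identify $|(a,b,k)|$ with a lattice-point count in the hexagonal solution set already described above, and then to evaluate that count by a rectangle-minus-corners argument.

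First I would note that, once the top line $(a,b,k)$ is fixed, a semi-magic square is completely determined by the single pair $(r,c)$, since every other entry is forced by the row- and column-sum conditions. Thus $|(a,b,k)|$ equals the number of integer points $(r,c)$ obeying the constraints derived above,
\[ 0 \le r \le b+k, \qquad 0 \le c \le a+b, \qquad -k \le c-r \le a, \]
and the proposition becomes a problem of counting lattice points in the region these inequalities cut out.

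Next I would count by inclusion--exclusion against the bounding rectangle $R=\{\,0\le r\le b+k,\ 0\le c\le a+b\,\}$, which contains $(b+k+1)(a+b+1)$ lattice points. The two diagonal constraints $c-r\ge -k$ and $c-r\le a$ remove precisely the lower-left corner of $R$ (the points with $r-c\ge k+1$) and its upper-right corner (the points with $c-r\ge a+1$). Summing over the relevant coordinate, each removed corner is a triangular array of $\frac{b(b+1)}{2}$ points; this is exactly where the diagonal side of length $b+1$ enters, both legs of each triangle having length $b$. Because one corner sits in the half-plane $c-r<0$ and the other in $c-r>0$, the two are disjoint and no point is deleted twice. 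Subtracting therefore yields $(b+k+1)(a+b+1)-b(b+1)$, and this expands directly to $1+(a+b+k)+(ab+ak+bk)$, as claimed.

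The expansion itself is routine; the care lies in the corner counts. I expect the main obstacle to be verifying uniformly---across the generic hexagon as well as the degenerate parallelogram cases, where one of $a,b,k$ vanishes, and the segment cases, where two vanish---that each deleted triangle contains exactly $\frac{b(b+1)}{2}$ points and that the two never overlap, so that the inclusion--exclusion needs no correction term. A quick sanity check, such as $a=b=k=1$ (where the formula predicts $7$ and the admissible pairs $(r,c)$ number exactly seven), protects against an off-by-one slip in the triangle sizes.
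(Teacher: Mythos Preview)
Your argument is correct and is precisely the approach the paper has in mind: the text already describes the solution region as ``a rectangle with two congruent right triangles removed from the upper right and lower left corners'' and then states that Proposition~2 ``follows directly,'' so you have simply supplied the arithmetic that the paper omits. Your verification that each removed corner contains $\tfrac{b(b+1)}{2}$ lattice points and that the expansion $(b+k+1)(a+b+1)-b(b+1)=1+(a+b+k)+(ab+ak+bk)$ holds is exactly what is needed, and the degenerate cases $b=0$ (parallelogram) and two parameters vanishing (segment) are automatically handled since the triangle count then collapses to zero.
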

In particular, $|(a, b, k)|$ is invariant under permutations of $a,$ $b$ and $k$, a fact consistent with our characterization of column permutations as isometries of the corresponding hexagon.  See Sect. 5 for an algorithm that computes the array of all $|(a, b, k)|$ for a given $J$.

\begin{proposition}[\cite{Slo}, A002817] Let $H_3(J)$ be the number of semi-magic squares with magic number $J.$ Then 
\begin{equation}
H_3(J)  =\frac{(J+1)(J+2)(J^2+3J+4)}8.
\end{equation}
\end{proposition}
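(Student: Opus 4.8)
The plan is to sum the fiber counts from Proposition 2 over all top lines enumerated by Proposition 1. Every semi-magic square in $\mathbb{M}_3(J)$ has a unique top line $(a,b,k)$ subject to $a+b+k=J$, so $\mathbb{M}_3(J)$ partitions according to top line and
\[
H_3(J) = \sum_{a+b+k=J} |(a,b,k)| = \sum_{a+b+k=J}\bigl[\,1 + (a+b+k) + (ab+ak+bk)\,\bigr],
\]
where the sum runs over nonnegative integer triples with $a+b+k=J$. On this constraint set $a+b+k=J$ is constant, so the first two summands contribute $(1+J)$ times the number of top lines, which Proposition 1 evaluates as $(1+J)\cdot\frac{(J+1)(J+2)}{2}=\frac{(J+1)^2(J+2)}{2}$.

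It remains to evaluate the symmetric quadratic sum $\Sigma := \sum_{a+b+k=J}(ab+ak+bk)$. First I would eliminate the cross terms using the identity $2(ab+ak+bk) = (a+b+k)^2 - (a^2+b^2+k^2) = J^2 - (a^2+b^2+k^2)$, which gives
\[
\Sigma = \tfrac12\Bigl[\,J^2\cdot\tfrac{(J+1)(J+2)}{2} - \sum_{a+b+k=J}(a^2+b^2+k^2)\,\Bigr].
\]
By the threefold symmetry of the constraint, the remaining sum equals $3\sum_{a+b+k=J}a^2$, and for each fixed value of $a$ there are exactly $J-a+1$ completions $(b,k)$ with $b+k=J-a$. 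Hence $\sum_{a+b+k=J}a^2 = \sum_{a=0}^{J}a^2(J+1-a)$, which collapses to the power sums $\sum a^2$ and $\sum a^3$ via their standard closed forms.

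Finally I would assemble the pieces. Substituting the closed forms back yields $\Sigma = \frac{(J-1)J(J+1)(J+2)}{8}$, and adding the $(1+J)$-weighted top-line count gives
\[
H_3(J) = \frac{(J+1)^2(J+2)}{2} + \frac{(J-1)J(J+1)(J+2)}{8}.
\]
Factoring out $\frac{(J+1)(J+2)}{8}$ and simplifying the bracket $4(J+1) + J(J-1) = J^2 + 3J + 4$ produces the claimed formula. The argument is structurally short, and the only real labor lies in the polynomial bookkeeping of the second step; the natural place to slip is the algebra of reducing and refactoring the degree-four symmetric sum, so I would keep the common factor $(J+1)(J+2)$ visible throughout rather than expanding into monomials, which guards against stray sign and coefficient errors.
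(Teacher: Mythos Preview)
Your proof is correct and follows essentially the same route as the paper: partition $\mathbb{M}_3(J)$ by top line, insert Proposition~2, and reduce the resulting symmetric sum to the standard power-sum identities for $\sum k$, $\sum k^2$, $\sum k^3$. The paper merely says ``direct computation'' where you supply the details (and your use of $2(ab+ak+bk)=J^2-(a^2+b^2+k^2)$ is a tidy way to organize that computation), but the underlying argument is the same.
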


\begin{remark}  For alternative expressions to (8), we have
\begin{eqnarray}
H_3(J)&= &3\left(\begin{array}{c} J+3\\ 4 \end{array}\right) + \left(\begin{array}{c} J+2\\ 2 \end{array}\right)\\
  &= & \left(\begin{array}{c} J+5\\ 5 \end{array}\right) - \left(\begin{array}{c} J+2\\ 5 \end{array}\right)\\
&= &\left(\begin{array}{c} J+4\\ 4 \end{array}\right) + \left(\begin{array}{c} J+3\\ 4 \end{array}\right) + \left(\begin{array}{c} J+2\\ 4 \end{array}\right).
\end{eqnarray}
Equation (9) is given in \cite{PMM},  Art. 407. In \cite{St2}, one finds (10) and (11) in an exercise (Ch. 2, Ex. 15, pp. 225, 236);  see also \cite{Bo}.
From the third equality, we have the generating function 
$$\sum\limits_{J\ge 0} H_3(J) x^J = \frac{1+x+x^2}{(1-x)^5},$$
also found in Sect. 4.6.1 of \cite{St2}.  For general properties of $H_n(J)$, see \cite{St1}, Ch. 1.1-1.9.
\end{remark}

\begin{proof}[Proposition 3]   From the partition triangle, we have that
\begin{equation}H_3(J)=\sum\limits_{k=0}^J\sum\limits_{b=0}^{J-k} |(J-b-k, b, k)|.\end{equation}
Using Proposition 2 and the identities
\[\sum\limits_{k=0}^N k = \frac{N(N+1)}{2},\quad \sum\limits_{k=0}^N k^2 = \frac{N(N+1)(2N+1)}{6},\quad \sum\limits_{k=0}^N k^3 = \frac{N^2(N+1)^2}{4},\]
the result follows from direct computation. Alternatively, the identities imply that $H_3(J)$ is a polynomial in $J$ of degree 4, and the coefficients may be simply computed using the first five values of $H_3(J):$
\[H_3(0)=1,\ \ H_3(1)=6,\ \ H_3(2)=21,\ \ H_3(3)=55,\ H_3(4)=120.\quad\qed \]
\end{proof}

\begin{remark}
An outline for this proof is noted in Sect. 6.2 of \cite{Lo2};   Proposition 2 provides the missing step in (6.13). 
\end{remark}

\section{Examples}
\label{sec:4}

We consider  $\mathbb{M}_3(4)$, the subset of semi-magic squares with $J=4.$  The triangle of top lines and corresponding magic square counts are given as Fig. 1. Here we have boxed entries to be used below. 
\begin{figure}
\includegraphics[scale=.9]{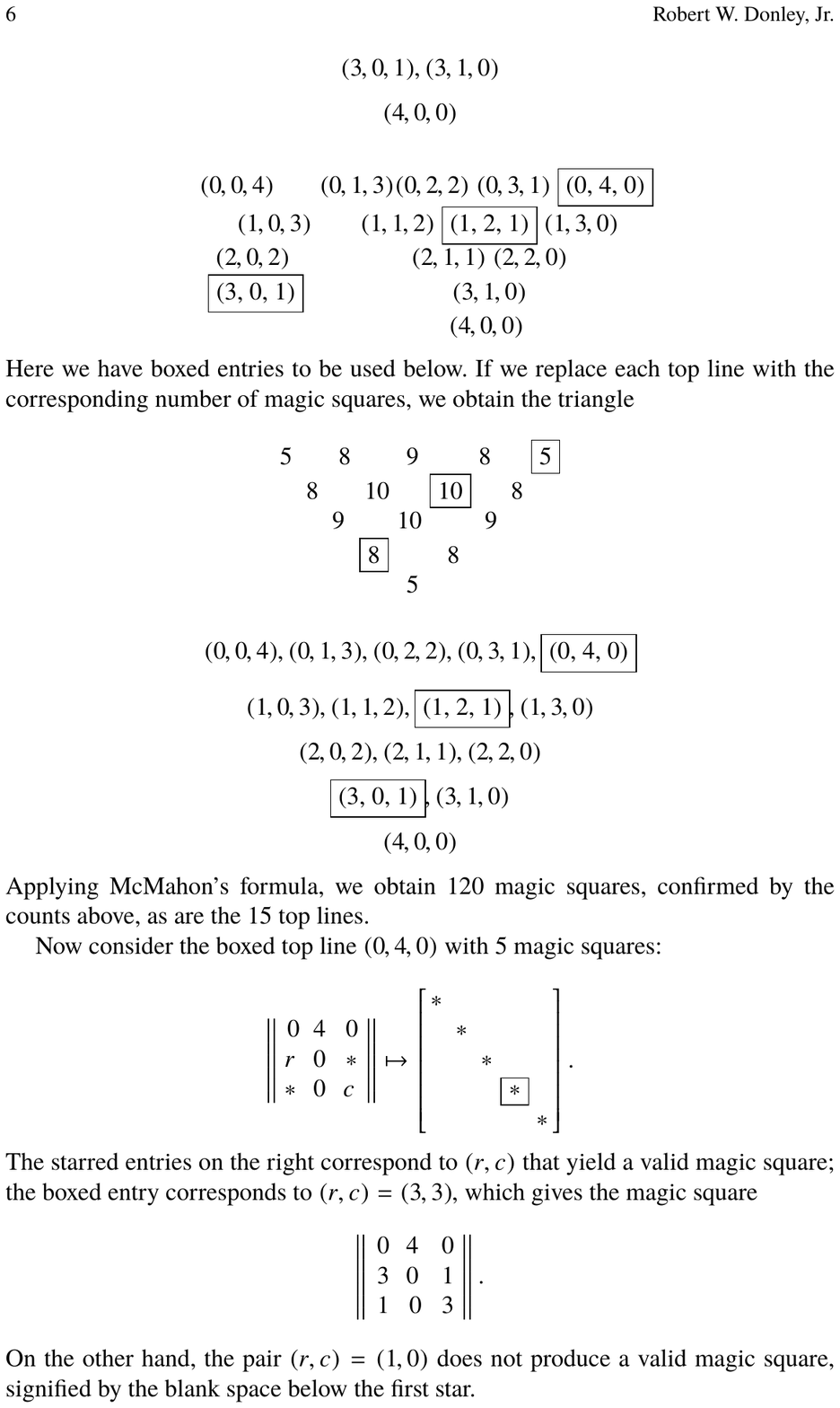}
\hspace{20pt}
\includegraphics[scale=1]{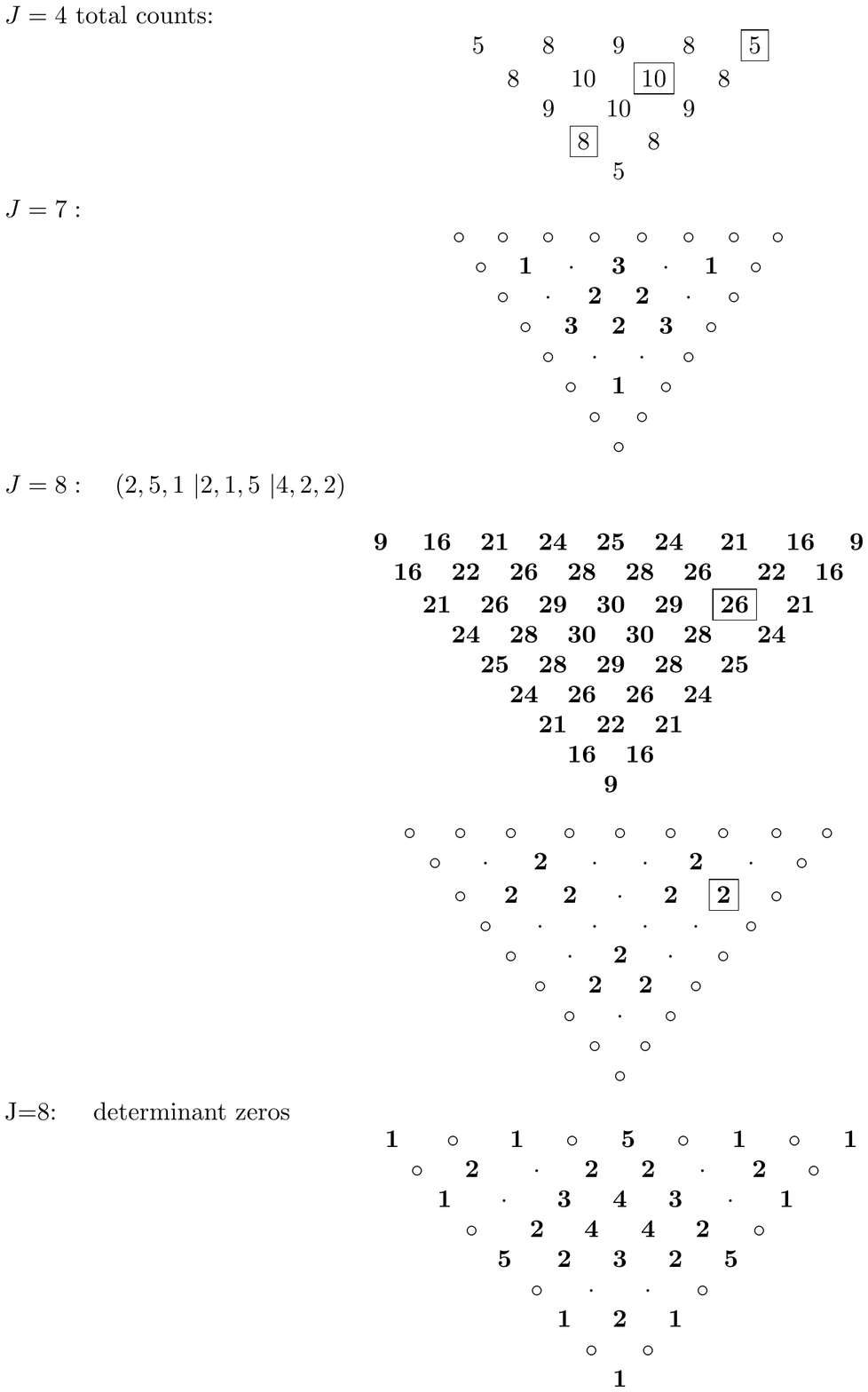}
\caption{\ $J=4$: {\bf a} Top lines and {\bf b} magic square counts.}
\label{Figure 1}
\end{figure}

Applying MacMahon's formula, we obtain 120 magic squares, confirmed by the counts above, as are the 15 top lines.

Now consider the boxed top line $(0, 4, 0)$ with 5 magic squares:

\[\left|\left|\begin{array}{ccc}\ 0 \ & 4 \ &  0\ \\ r & 0\ &   * \\ \, * & 0\ & c \end{array}\right|\right|\mapsto \left[\begin{array}{ccccc} *\ &  & & & \\ & * \ &  & & \\ & & * & & \\ & & & \fbox{$*$} &\\ & & & & *\end{array}\right].\]
The starred entries on the right correspond to $(r, c)$ that yield a valid magic square; the boxed entry corresponds to $(r, c)=(3, 3),$ which gives the magic square   
\[\left|\left|\begin{array}{ccc} \ 0 \ &  4 \ &  0\ \\  3  & 0 \  &  1 \\ 1 &  0\  & 3 \end{array}\right|\right|.\]
On the other hand, the pair $(r, c)=(1, 0)$ does not produce a valid magic square, signified by the blank space below the first star.

Next we have the boxed top line $(3, 0, 1)$ with 8 magic squares:

\[\left|\left|\begin{array}{ccc}\ 3 \ & 0 \ &  1\ \\ r & *\ &   *\ \\ \, * & * & c\ \end{array}\right|\right|\mapsto 
\left[\begin{array}{cccc}  * \ & * &  * & *  \\ 
\,  * \  & \, *  &  \fbox{$*$} & \, *  \\   
\phantom{-} & \phantom{-}  & \phantom{-}  & \phantom{-}  \\ 
\phantom{-} & \phantom{-} & \phantom{-} & \phantom{-} 
\end{array}\right].\]
The boxed entry corresponds to $(r, c)=(1, 2),$ corresponding to the magic square   
\[\left|\left|\begin{array}{ccc} \ 3 \ &  0 \ & \ 1\ \\  1  & 2   &  1 \\ 0 &  2  & 2 \end{array}\right|\right|.\]

Finally consider the boxed top line $(1, 2, 1)$ with 10 magic squares:

\[\left|\left|\begin{array}{ccc}\ 1 \ & 2 \ &  1\ \\ r & *\ &   *\ \\ \, * & *\ & c\ \end{array}\right|\right|\mapsto \left[\begin{array}{cccc} \, *\ & * &   &   \\  \, * &  * &  * &  \  \\   &  * & \fbox{$*$} & * \ \\ & & * & * \  \\ \  \end{array}\right].\]
The boxed entry corresponds to $(r, c)=(2, 2),$ giving the magic square   
\[\left|\left|\begin{array}{ccc} \ 1 \ &  2 \ &  1\ \\  2  & 1 \  &  1\ \\ 1 &  1\  & 2\ \end{array}\right|\right|.\]

Of course, if we evaluate $C(M)$ for all $M$ in each array, we obtain, respectively,  
\[ \left[\begin{array}{ccccc} 1 \ &  & & & \\ & 1 \ &  & & \\ & & 1\ & & \\ & & & 1\ &\\ & & & & 1\ \end{array}\right],\quad  \left[\begin{array}{cccc} \ \ 1  &\ \  1 & \ \  1 & \ \ 1  \\ -4 &  -3 & -2 & -1   \end{array}\right], \quad
\left[ \begin{array}{cccc}\ \, 3 & \ \ 3 &   &   \\ -2 \  & \ \ 1 & \ \ 4 &  \ \\   &  -2\, & -1 & \ \ 3 \, \\ & & -2 \, & -3  \  \end{array}\right].\]

\section{Examples of zeros}
\label{sec:5}

We now apply the partition of  Sect. 3 to organize the zeros of $C(M)$ in each $\mathbb{M}_3(J).$  For instance, if $J=8,$  Fig. 2 gives an enumeration by top lines of magic squares and zeros of $C(M).$

\begin{figure}
\includegraphics[scale=.7]{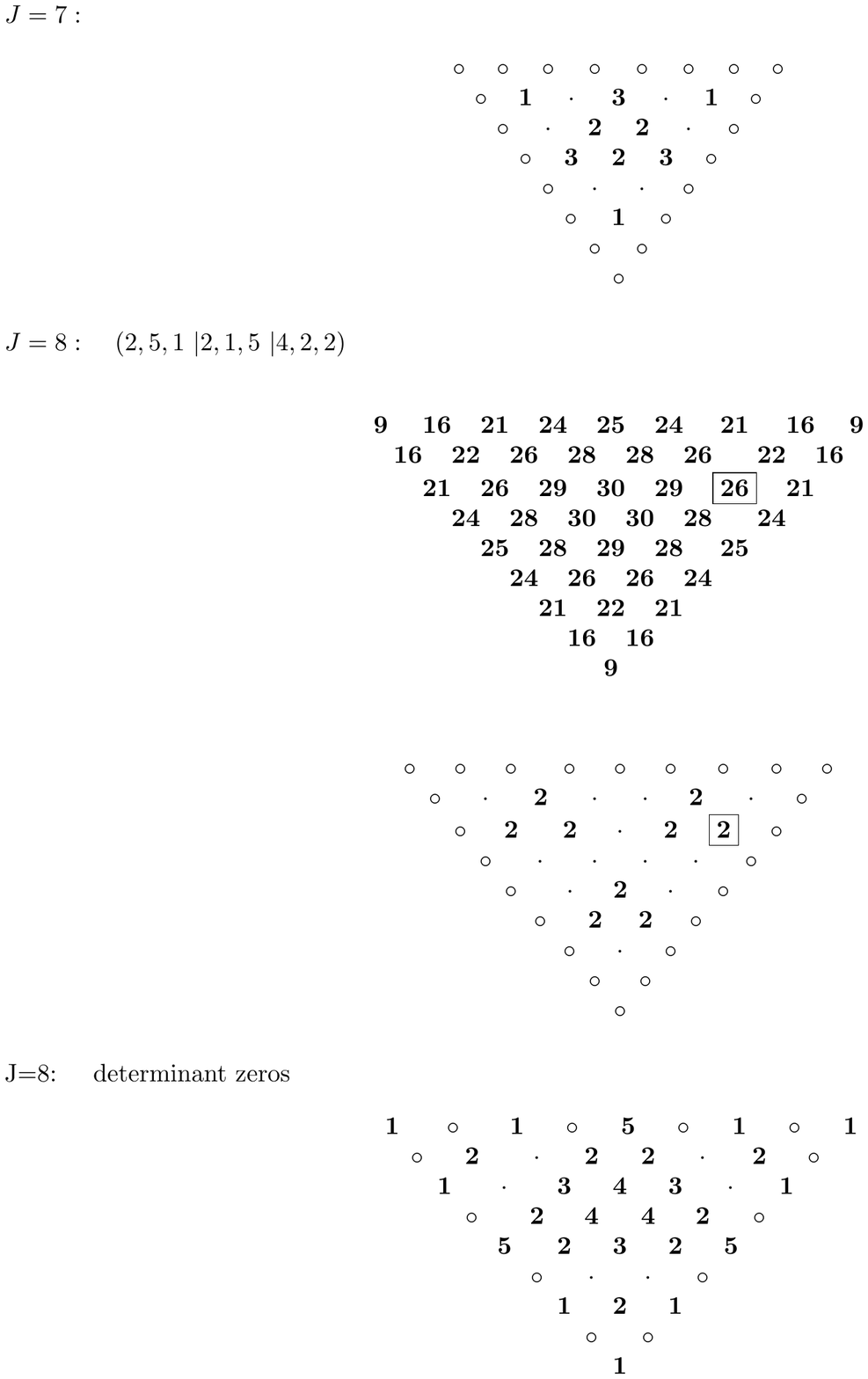}
\hspace{30pt}
\includegraphics[scale=.7]{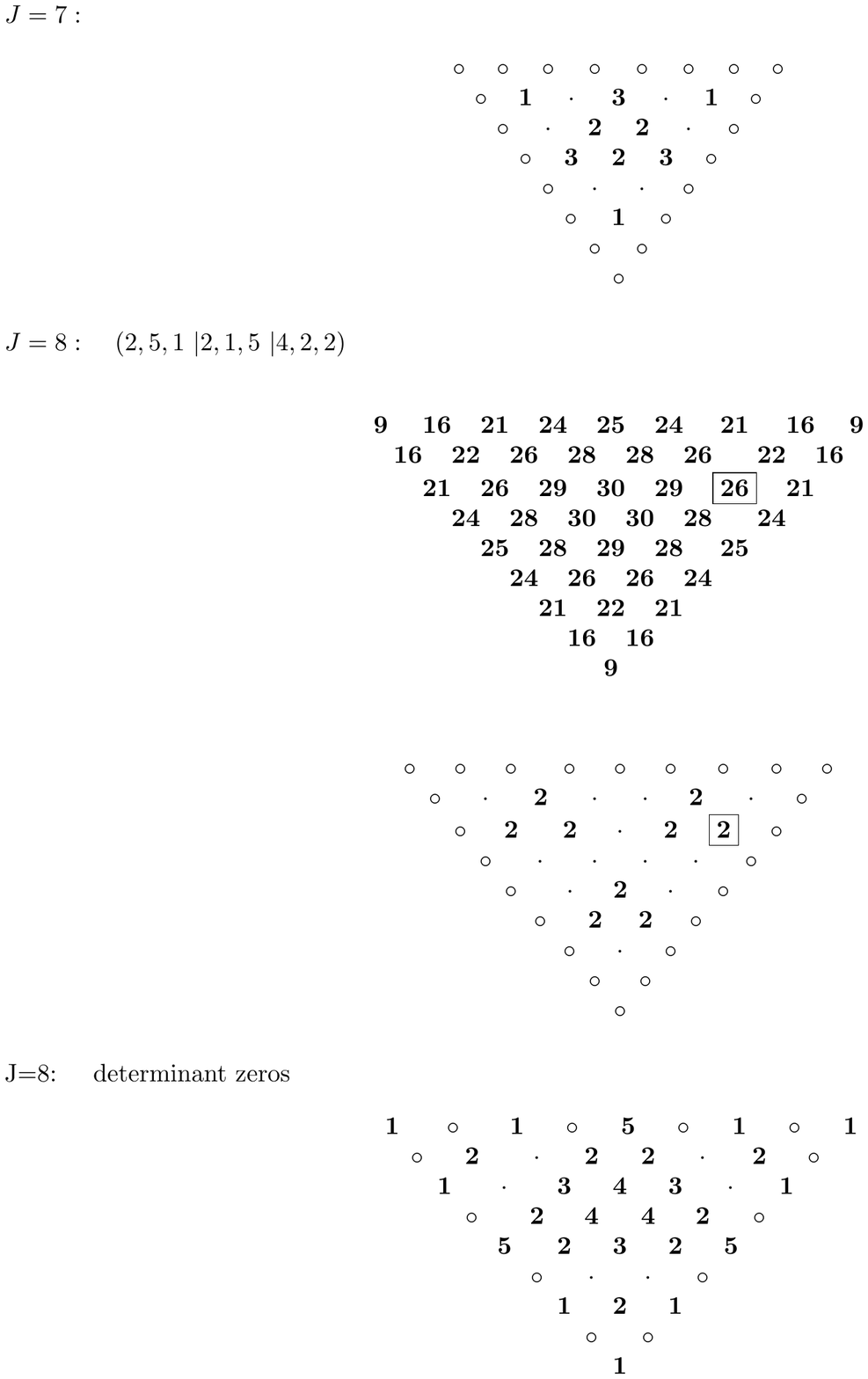}
\caption{\ $J=8$: {\bf a} Magic square counts and  {\bf b} zero counts.}
\label{Figure 2}
\end{figure}
For the figure on the left, we note that the magic square counts may be generated algorithmically using (7). That is,
\begin{itemize}
\item the vertices have value $J+1,$
\item the differences along rows progress arithmetically, decreasing by increments of 2, and
\item the starting differences for each row begin with $J-1$ in the top row and decrease by increments of 1.
\end{itemize}
One notes that values are preserved under the symmetries induced by permutations of columns.  Top line orbits have orders 1, 3, or 6.   Any fixed point is of the form $(a, a, a)$ and corresponds to  the central entry in the triangle when $J=3a.$

On the right in Fig. 2, we see that there are 18 zeros of $C(M)$ for $J=8.$  These zeros correspond to a single orbit of magic squares under the full determinantal group, with representative noted below.

The boxed entries correspond to the top line $(2, 5, 1),$ confirmed by counting rows above and spaces to each side of the box.  The hexagon corresponding to top line $(2, 5, 1)$ is given by
\begin{equation}\left[ \begin{array}{cccccccc} \ \ 6 & \ \ 6 & \ \ 6 & & & & & \\ -3\, & \ \ 3  & \ \ 9 & \ 15 &  & & & \\  & -3\, & \ \ \fbox{$0$} & \ \ \, 9 & \ 24 & & & \\ & &  -3 & -3  & \ \ \, 6 & \ 30 & & \\ & & & -3 & -6 &\ \  \fbox{$0$} & \ \ 30 &  \\ & & &  & -3 & -9 &\ \, -9 & \ \ \, 21 \\ & & &  &  & -3 & -12 & -21\end{array}\right],\end{equation}
from which one readily counts 26 magic squares and two zeros. In particular, for the boxed zeros, we have coordinates  $(r, c) \ =\ (2, 2), \ (4, 5),$ yielding the magic squares
 \[\left|\left|\begin{array}{ccc}\ 2\ & 5\ & 1\ \\ 2 & 1\ & 5\  \\ 4 & 2\ & 2\ \end{array}\right|\right|,\qquad \left|\left|\begin{array}{ccc}\ 2\ & 5\ & 1\ \\ 4 & 2\ & 2\ \\ 2 & 1\ & 5\ \end{array}\right|\right|. \]
 These squares are related by the Weyl group symmetry, which switches the lower rows; the effect on the hexagon is to rotate by 180 degrees, while the Clebsch-Gordan coefficients transform according to formula (8.5) in \cite{Do}.
Additionally, we recall that this hexagon records all data for the tensor product 
\[ V(6)\otimes V(3)\rightarrow V(7).\]

In general, we will consider only the triangle for the zero locus of $C(M)$ in $\mathbb{M}_3(J)$ and triangles for the orbits of zeros under the full symmetry group.  For example, the zero locus corresponding to $J=24$ contains six orbits contributing 252 zeros.  
Fig. 3 gives the full portrait of zero counts, and Fig. 4 gives the portrait of zero counts for the orbit containing the twelve zeros for top line $(8, 8, 8)$.  

The hexagon for top line $(8, 8, 8)$ is preserved under a symmetry group of type $D_{12},$ the dihedral group of order 12, and all zeros for $(8, 8, 8)$ consist of a single orbit under $D_{12}$.  A representative of this orbit is given by

\begin{figure}
\includegraphics[scale=.9]{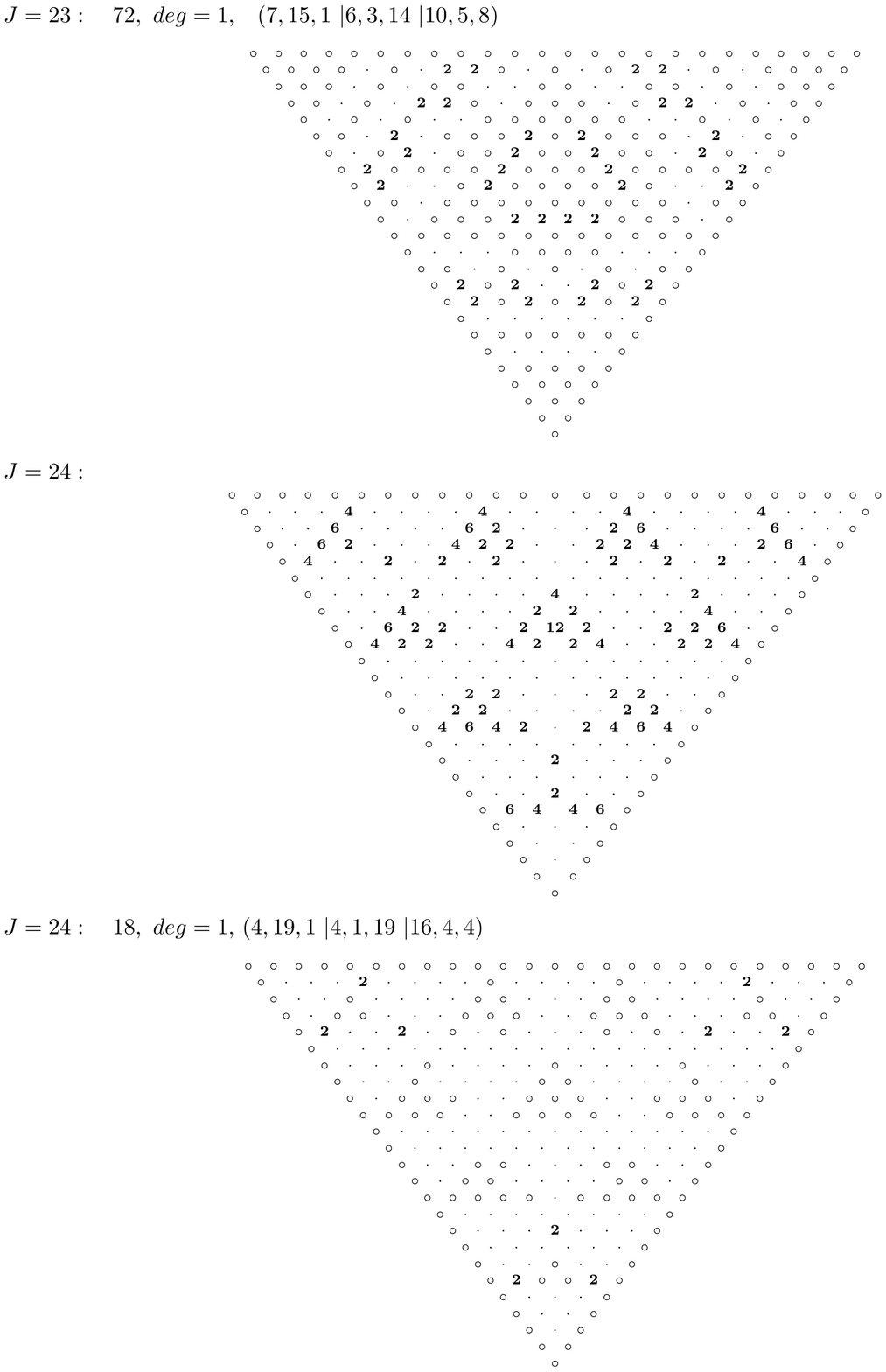}
\caption{The 252 zeros for $J=24$.}
\label{Figure 3}
\end{figure}

\begin{figure}
\includegraphics[scale=.9]{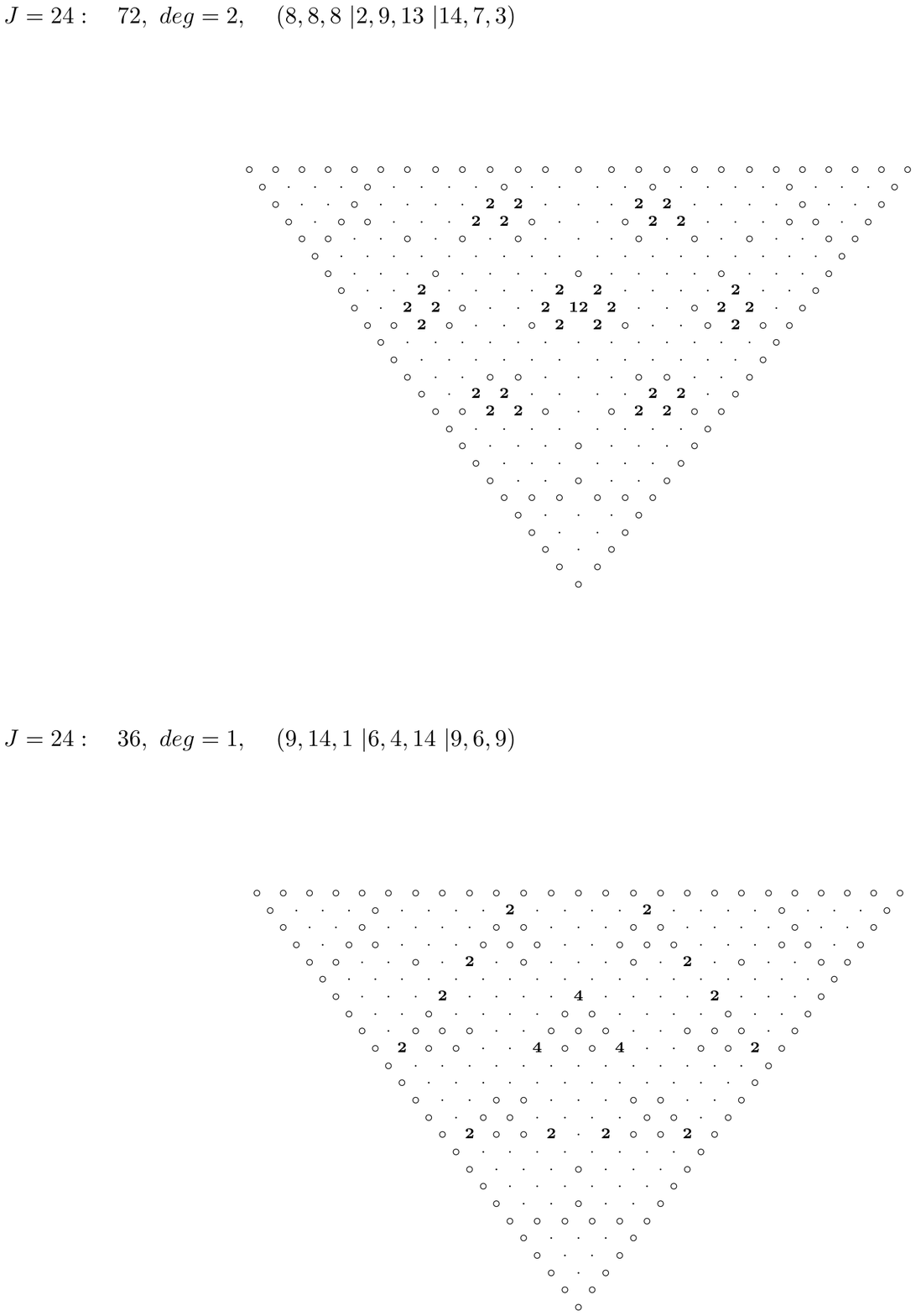}
\caption{The 72 zeros for the orbit of $M_{(8, 8, 8)}$.}
\label{Figure 4}
\end{figure}

\[
M_{(8, 8, 8)}=
\left|\left|\begin{array}{ccc}
 \ \,  8\, &  \ 8 &\ \  8\, \\
 \ 2&  \ 9 & 13\, \\
 14 & \ 7 & \ \ 3\, \\
\end{array}\right|\right|.
\]

A necessary condition to have an orbit of order 12 is that the hexagon be regular, or equivalently that the top line is $(a, a, a),$ the center of the triangle when $J=3a.$ As of this writing, the author knows of no other examples; see Fig. 2 in \cite{Do2}.

\section{Orbits and a second partition}
\label{sec:6}

For another approach to Proposition 3, we consider a different partitioning of each $\mathbb{M}_3(J)$, closer in spirit to Sect. 4 of \cite{BZ}, which uses convex geometry and specific representatives to construct generating functions for related classes of magic squares.  

Consider the following definitions:

\begin{definition} Let $U$ denote the element of $\mathbb{M}_3(3)$ with all entries having value 1.
\end{definition}

\begin{definition}  We say that an element $M$ of $\mathbb{M}_3$ is a {\bf reduced} if $0$ occurs as an entry in $M$.  For a fixed $J\ge 0$, we denote the set of all reduced squares in $\mathbb{M}_3(J)$ by $N_3(J)$.  
\end{definition}

If any entry of a top line equals zero, then the corresponding polygon consists entirely of reduced magic squares; otherwise, the boundary of the corresponding hexagon consists precisely of the reduced magic squares with this top line.  

\begin{definition}  Let $\mathbb{M}_3^s$ denote the subset of $\mathbb{M}_3$ of squares with smallest entry $s$, and let $\mathbb{M}_3^s(J)=\mathbb{M}_3^s \cap \mathbb{M}_3(J)$. 
\end{definition}

The partitioning in question is rooted in the following observation:
\begin{lemma}
Let $M$ be an element of $\mathbb{M}^s_3(J)$.  Then there exists a unique decomposition of $M$ as 
\[M=sU+N\]
with $N$ a reduced square in $\mathbb{M}_3(J-3s).$
\end{lemma}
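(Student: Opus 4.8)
The plan is to exhibit the decomposition explicitly and then observe that it is forced. Given $M \in \mathbb{M}_3^s(J)$, I would simply \emph{define} $N := M - sU$, where the scalar $s$ is the smallest entry of $M$ supplied by the hypothesis, and then verify that $N$ meets every requirement of the statement. Uniqueness will follow at once, since the hypothesis $M \in \mathbb{M}_3^s(J)$ pins down $s$, after which $N = M - sU$ is the only candidate.

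The verification proceeds in three short steps. First, since $U$ has all entries equal to $1$, the $(i,j)$ entry of $N$ is $M_{ij} - s$, which is a nonnegative integer precisely because $s$ is the smallest entry of $M$, so $M_{ij} \geq s$ for every position. Second, subtracting $sU$ decreases every row sum and every column sum of $M$ by exactly $3s$, so $N$ is again semi-magic with common line sum $J - 3s$; this value is nonnegative because each row of $M$ consists of three entries, each at least $s$, forcing $J \geq 3s$. Hence $N \in \mathbb{M}_3(J - 3s)$. Third, because the minimum value $s$ is \emph{attained} at some position $(i_0, j_0)$ of $M$, the corresponding entry $N_{i_0 j_0} = 0$, so $N$ is reduced by definition.

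For uniqueness, suppose $M = sU + N'$ with $N'$ a reduced square in $\mathbb{M}_3(J - 3s)$. Subtracting $sU$ from both sides forces $N' = M - sU = N$, so the decomposition is unique.

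I do not anticipate a substantive obstacle, as the argument is essentially a definition-chase. The only two points needing explicit justification are that the resulting magic number $J - 3s$ is nonnegative, so that $N$ is an honest element of $\mathbb{M}_3$, and that $s$ is genuinely attained as an entry of $M$, which is exactly what yields the zero entry certifying that $N$ is reduced. Both facts follow directly from $s$ being the smallest entry of $M$.
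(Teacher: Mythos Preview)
Your argument is correct and is exactly the natural verification the paper has in mind; the lemma is stated there as an observation without a separate proof, and your definition-chase (set $N=M-sU$, check nonnegativity, line sums, the attained zero, and then note $N$ is forced once $s$ is fixed) supplies precisely the routine details.
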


Of course,
\begin{equation}
\mathbb{M}_3^s(J)=sU+N_3(J-3s).
\end{equation}
When $J\ge 3,$ we may partition $\mathbb{M}_3(J)$ as
\begin{eqnarray}
\mathbb{M}_3(J) &=& N_3(J) \cup (U+\mathbb{M}_3(J-3))\nonumber\\
&=& N_3(J) \cup (U+N_3(J-3)) \cup (2U+N_3(J-6)) \cup\dots.
\end{eqnarray}
The first equation partitions $\mathbb{M}_3(J)$ into reduced and non-reduced squares, while the latter partitions by smallest value.  In any case, the cardinality of $\mathbb{M}_3(J)$ may be obtained from the cardinalities of $N_3(l)$ with $l\le J.$ 

Now $U$ is unchanged under the group of determinantal symmetries, so it is enough to decompose each $N_3(J)$ into orbits under the group $G$ and count orbits using a consistent set of representatives.  Omitting a representative for orbit size 72, the following representatives correspond to the data  in Table 1:

\begin{eqnarray}
& \left|\left|\begin{array}{ccc}
 \ a& \  a\ &\ a\, \\
 \ a & \ a &\ a\\
 \ a & \ a &\ a\\
\end{array}\right|\right|,\ \left|\left|\begin{array}{ccc}
 \  b & \  a\ &\ a\, \\
 \ a & \ b &\ a\\
 \ a & \ a &\ b\\
\end{array}\right|\right|,\ 
\left|\left|\begin{array}{ccc}
 \ b& \  c\ &\ c\, \\
 \ c & \ a &\ a\\
 \ c & \ a &\ a\\
\end{array}\right|\right|,\ 
\left|\left|\begin{array}{ccc}
 \ a& \  c\ &\ b\, \\
 \ c & \ b &\ a\\
 \ b & \ a &\ c\\
\end{array}\right|\right|,\nonumber\\
&\left|\left|\begin{array}{ccc}
 \ b& \  a\ &\ a\, \\
 \ a & \ c &\ d\\
 \ a & \ d &\ c\\
\end{array}\right|\right|,\ 
\left|\left|\begin{array}{ccc}
 \ a& \  d\ &\ f\, \\
 \ d & \ b &\ e\\
 \ f & \ e &\ c\\
\end{array}\right|\right|,\ 
\left|\left|\begin{array}{ccc}
 \ d& \  e\ &\ f\, \\
 \ a & \ b &\ c\\
 \ a & \ b &\ c\\
\end{array}\right|\right|\ 
.
\end{eqnarray}

Stabilizer subgroups $D_{2n}$ of $G$ in Table 1 are dihedral groups with $2n$ elements.

\begin{table}
\caption{Orbit counts for reduced squares}
\label{tab:1}
\begin{tabular}{p{2cm}|p{3cm}p{2.5cm}|p{3.5cm}}
\hline\noalign{\smallskip}
  & \# Orbits of & &  \\
Orbit Size & Reduced Squares & & Stabilizer \\
\noalign{\smallskip}\svhline\noalign{\smallskip}  
1 & 1 & $J=0$ & G\\   \hline
 6 & 1 & $J$ odd  & $\langle R(\sigma)C(\sigma), T\rangle\cong D_{12}$ \\
  & 2 & $J$ even &  \\   \hline
9  & 1 & $J=4t+2$  & $\langle R(23), C(23), T\rangle\cong D_{8}$ \\
    & 2 & $J=4t$  &  \\  \hline
12  & $t-1$ & $J=2t$  & $\langle R(123)C(132), T\rangle\cong S_3$ \\   
    & $t$ & $J=2t+1$  &  \\    \hline
18  & $2t$ & $J=2t+1$  & $\langle R(23)C(23), T\rangle$ \\   
      & $5t-3$ & $J=4t$  &  $\qquad\qquad\cong \mathbb{Z}/2\times \mathbb{Z}/2$\\   
      & $5t$ & $J=4t+2$  &  \\    \hline
 36  & $(t-1)(3t-4)/2$ & $J=2t$  & $\langle T\rangle\cong \mathbb{Z}/2$ \\    
       & $3t(t-1)/2$ & $J=2t+1$  &  \\    \hline
 36  & $t-1$ & $J=4t$  & $\langle R(23)\rangle\cong \mathbb{Z}/2$ \\     
       & $t$ & $J=4t+2$  & \\      \hline
 72  & $t(t-1)(t-2)/6$ & $J=2t+1\ge 7$  & $\langle e\rangle$  \\    
       & $t(t-1)(4t-5)/3$ & $J=4t\ge 8$  \\  
       & $t(t-1)(4t+1)/3$ & $J=4t+2\ge 10$   \\  
  \noalign{\smallskip}\hline\noalign{\smallskip}
\end{tabular}
\end{table}

To verify Table 1, the counts for orbits of constant or linear size may be computed directly.  For the orbit type of size 36 with symmetric representative, we note that two cases occur: either two zeros may be placed with $f=0$ or a single zero with $a=0.$ Then $M$ is the determined by the choice of $d$ and $e$.  The  range of allowed values in either case forms one or two  isosceles right triangles.  Counting over arithmetic progressions, we obtain a quadratic polynomial in $t.$  Initial conditions determine the coefficients.

For the orbits of size 72, we note that
\begin{equation}|N_3(J)|=H_3(J)-H_3(J-3)=\frac{3J(J^2+3)}2;\end{equation}
the formulas follow by  removing counts for orbits of smaller size in each case.

Of course, if we calculate the latter orbit counts directly, one could give another proof of Proposition 3.  Instead, we  illustrate the method for this proof to count trivial zeros in $\mathbb{M}_3(J)$ in the next section.

\section{Trivial zeros}
\label{sec:7}

In this section, we define the class of trivial zeros of $C(M)$ and give generating functions for both the number of orbits and number of trivial zeros in $\mathbb{M}_3(J)$ for any $J$. In this work, we apply the notion of trivial only to elements in $\mathbb{M}_3(J).$ Traditionally, the domain for Clebsch-Gordan coefficients is extended by zero in an appropriate manner.

\begin{definition}
Suppose $M$ is in $\mathbb{M}_3(J)$ with all positive entries. Then $M$ is called a {\bf trivial zero} of $C(M)$ if and only if $J$ is odd and $M$ has at least one  pair of matching rows or columns.  
\end{definition}

With the matching condition and even $J$, $C(M)$ is non-zero and may be computed explicitly. An orbit of zeros for $C(M)$ under $G$ corresponding to a trivial zero consists of trivial zeros. We call such an orbit a {\bf trivial orbit} of zeros.

\begin{definition}
For $J\ge 0$, let  $O^T_3(J)$ be the number of trivial orbits in $\mathbb{M}_3(J)$, and let $H_3^T(J)$ be the number of trivial zeros in $\mathbb{M}_3(J)$. 
\end{definition}

\begin{theorem}
The generating function that counts the number of trivial orbits $O_3^T(J)$ in $\mathbb{M}_3(J)$ is given by 
\begin{equation} \sum\limits_{J\ge 0}\ O_3^T(J)\ x^J = \frac{x^3(1+2x^6-x^{10})}{(1-x^4)^2(1-x^6)}.
\end{equation}

The generating function that counts the number of trivial zeros $H_3^T(J)$ in $\mathbb{M}_3(J)$ is given by 
\begin{equation} \sum\limits_{J\ge 0}\ H_3^T(J)\ x^J = \frac{x^3(1+9x^2+16x^4+27x^6+19x^{8})}{(1-x^4)^2(1-x^6)}.
\end{equation}
\end{theorem}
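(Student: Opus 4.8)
The plan is to enumerate trivial zeros one $G$-orbit at a time, feeding the decomposition $M = sU + N$ of the Lemma into the orbit census of Table~1. The first step is to recast the defining condition as a stabilizer condition: because every element of $G$ permutes the entries of a square, $M$ has a pair of matching rows (resp.\ columns) exactly when its stabilizer in $G$ contains a pure row transposition $R(ij)$ (resp.\ a pure column transposition $C(ij)$; briefly, a \emph{line-transposition}). Thus the trivial zeros in $\mathbb{M}_3(J)$ are precisely the positive squares with $J$ odd whose stabilizer meets the set of line-transpositions. (That such squares really are zeros, i.e.\ that $C(M)=0$ when $J$ is odd and a line is repeated, I would record or cite separately; the count itself uses only the combinatorial definition.)

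Second, I would transport this condition through the decomposition. Writing $M = sU + N$ with $N$ reduced and $s$ the minimal entry, positivity of $M$ is equivalent to $s \ge 1$, and since $U$ is constant, $M$ and $N$ exhibit the same coincidences among rows and among columns; hence $\mathrm{Stab}(M) = \mathrm{Stab}(N)$, and $M$ is a trivial zero iff $s \ge 1$, $J$ is odd, and the reduced square $N$ has a stabilizer containing a line-transposition. Scanning the stabilizers in Table~1, the only ones that do are $G$ (orbit size $1$), $D_8 = \langle R(23), C(23), T\rangle$ (size $9$), and $\langle R(23)\rangle$ (size $36$); in each of $D_{12}$, $S_3$, $\mathbb{Z}/2\times\mathbb{Z}/2$, $\langle T\rangle$ and the trivial group, every nonidentity element mixes a row operation with a column operation or a transpose, so those squares have pairwise distinct rows and pairwise distinct columns. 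A short parity check finishes the setup: a reduced square with a repeated line has even magic number (a zero entry, together with the column-sum relations $d_i = J - 2a_i$, forces some parameter to equal $J/2$), so $\ell = J - 3s$ is even, and with $J$ odd this forces $s$ to be odd.

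Third, the enumeration becomes bookkeeping with Table~1. Let $o_1,o_3,o_7$ count the orbits of type $G$, $D_8$, $\langle R(23)\rangle$ in $N_3(\ell)$; Table~1 gives $o_1(\ell)=[\ell=0]$ and the values of $o_3,o_7$ on the residues $\ell\equiv 0,2 \pmod 4$, whence $\mathcal{O}_1(x) = 1$, $\mathcal{O}_3(x) = (x^2 + 2x^4)/(1-x^4)$ and $\mathcal{O}_7(x) = (x^6 + x^8)/(1-x^4)^2$. Each trivial zero is determined by $s$ together with the $G$-orbit of $N$, and the orbit of $M$ has the same size ($1$, $9$, or $36$) as that of $N$; summing over admissible $s$ therefore gives
\[
\sum_{J} O_3^T(J)\, x^J = \bigl(\mathcal{O}_1 + \mathcal{O}_3 + \mathcal{O}_7\bigr)(x) \sum_{s \ge 1,\ s\ \mathrm{odd}} x^{3s},
\]
\[
\sum_{J} H_3^T(J)\, x^J = \bigl(\mathcal{O}_1 + 9\,\mathcal{O}_3 + 36\,\mathcal{O}_7\bigr)(x) \sum_{s \ge 1,\ s\ \mathrm{odd}} x^{3s},
\]
the first weighting each contributing orbit by $1$ and the second by its size. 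Since $\sum_{s \ge 1,\ s\ \mathrm{odd}} x^{3s} = x^3/(1-x^6)$, placing everything over the common denominator $(1-x^4)^2(1-x^6)$ and collecting terms produces the claimed rational functions.

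I expect the crux to be the classification in the second step, not the final algebra. One must verify with care that ``repeated row or column'' is detected by exactly the three stabilizers above and by no other orbit type, and one must extract the piecewise counts $o_3, o_7$ from Table~1 with the right residues modulo $4$ and the right starting indices. A built-in sanity check guards this: both generating functions are assembled from the \emph{same} orbit data, differing only through the weights $1$ versus $1,9,36$, so any slip in the residue bookkeeping desynchronizes them and is immediately visible in the low-order coefficients ($J = 3,5,7,9$), which can be checked against squares listed directly.
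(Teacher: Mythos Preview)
Your argument is essentially the paper's: both isolate the three orbit types (sizes $1$, $9$, $36$ with stabilizer $\langle R(23)\rangle$) as the only ones containing a line-transposition, push the repeated-line condition through $M=sU+N$ to reduce to Table~1, and sum the resulting series; your packaging via $\mathcal{O}_1,\mathcal{O}_3,\mathcal{O}_7$ and the single factor $x^3/(1-x^6)$ is a slightly cleaner version of the paper's case-by-case summation, but the substance is identical. One caution on your last line: when you actually collect terms for $O_3^T$, the numerator that your setup (and the paper's own displayed pieces $x^3/(1-x^6)$, $(x^5+2x^7)/((1-x^4)(1-x^6))$, $(x^9+x^{11})/((1-x^4)^2(1-x^6))$) produces is $x^3(1+x^2)$, not the printed $x^3(1+2x^6-x^{10})$; the low-order sanity check you propose (e.g.\ $O_3^T(5)=1$) confirms the former, so treat~(18) as a misprint rather than a target.
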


\begin{proof}
To count orbits, we first consider those orbits in Table 1 corresponding to semi-magic squares with matching rows or columns.  Only three types contribute; these orbits have sizes 1, 9, or 36.   

For example, consider the number of trivial orbits for stabilizer of type $\langle R(23)\rangle$ in $\mathbb{M}_3(J)$ with $J$ odd.  Reduced squares of this type only occur for even $J$;  thus none contribute directly to $O^T_3(J).$  

Suppose such a reduced square $M$ occurs in $\mathbb{M}_3(J')$ with $J'=4t$. Addition of $U$ to $M$ once gives a trivial orbit for $J=4t+3$; contributions to odd $J$ follow now by repeatedly adding $2U$.  To the generating function that counts trivial orbits, the contribution for reduced squares of this type in $\mathbb{M}_3(4t)$  is
\begin{equation}(t-1)x^{4t+3}(1+x^6+x^{12}+\dots) = \frac{(t-1)x^{4t+3}}{1-x^6}.\end{equation}
Summing over positive $t$ and noting
\[1+2x+3x^2+\dots = \frac{1}{(1-x)^2},
\]
we obtain
\[\frac{x^{11}}{(1-x^6)(1-x^4)^2}.\]
Likewise, for reduced squares associated to $J'=4t+2,$ the contribution is
\[\frac{x^9}{(1-x^6)(1-x^4)^2}. \]

Thus the generating  function that counts trivial orbits of this type is
\begin{equation}\frac{x^9+x^{11}}{(1-x^6)(1-x^4)^2}.\end{equation}

A similar calculation shows that the orbit counts for sizes 1 and 9 are given by
\begin{equation}\frac{x^3}{1-x^6},\quad \frac{x^5+2x^7}{(1-x^4)(1-x^6)},\end{equation}
respectively.  Both parts of the theorem now follow. $\qed$
\end{proof}

\section{Triangles  for trivial zeros}
\label{sec:8}

Given a fixed $J$, the top line partition for trivial zeros admit a uniform description, as do the orbits under $G$.  There are three components to the top line partition:
\begin{itemize}
\item in general, for each top line $(a, b, k)$ with all odd entries,  values of 1 occur in alternating rows and then alternate along these rows,
\item when $J\ne 3a$, the bisectors from each vertex to the opposite edge have values that progress from 1 to $\frac{J-1}2$; if $J=3a,$ a similar progression occurs, but the value of the middle entry is replaced by  $J-2$, and
\item the triangle with vertices at edge midpoints consists of values of 2, except with values of 3 occurring from the first item and values on the bisectors given by the second item.
\end{itemize}

For example, consider the triangle of zeros for $J=13,$ all of which are trivial:
\begin{figure}
\includegraphics[scale=1]{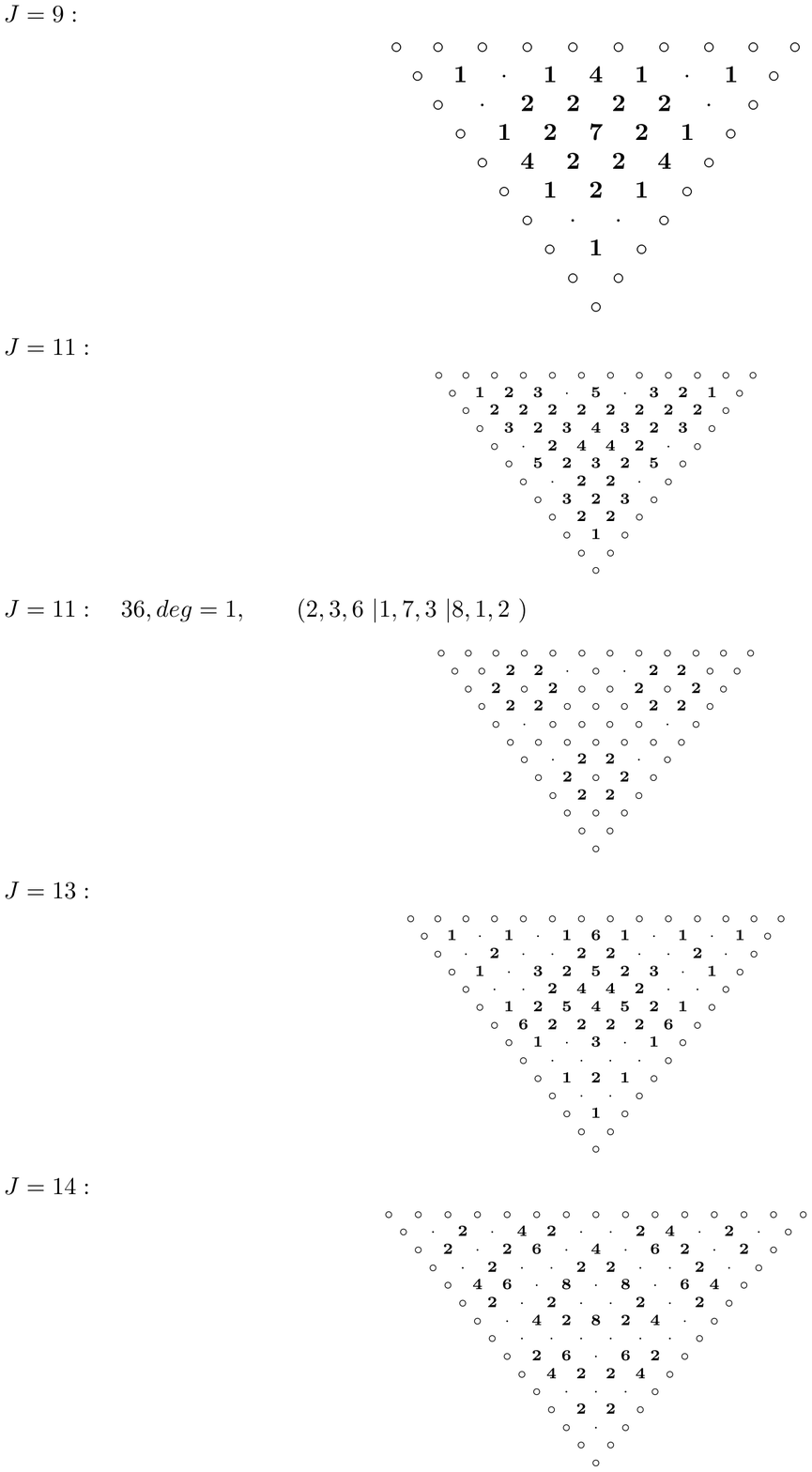}
\caption{\ All zeros for $J=13$ are trivial.}
\label{Figure 5}
\end{figure}

To explain these counts, it may also help to consult the next section for the example of $J=15$. First, the pattern for value 1 is Proposition 15 of \cite{Do2};  the Weyl group symmetry on a hexagon can only have a fixed point at the center, and this center exists and has $C(M)=0$ exactly when all entries of the top line are odd and the lower rows of $M$ match.  For other trivial zeros, any pair of columns or one of the other row pairs match, and these zeros occur as doublets under the Weyl group symmetry. 

For the bisector progressions, see Propositions 11, 13, and 14 of Sect. 4 in \cite{Do2};  in these cases,  the top line has repeat values, and the hexagon has additional symmetries.  When the top line has a matching pair and distinct third value, the hexagon has four sides of equal length and a diagonal of trivial zeros occur; the values along a bisector change with the width of the hexagon. For top line $(a, a, a)$, the hexagon is regular, and three diagonals of zeros of equal length intersect at the center.

Finally, for the central triangle with midpoint vertices, the values of 2 arise when the upper row of $M$ matches only one of the other rows. Off the bisectors, values of 3 in the triangle occur when the top line has all odd entries.

With this enumeration of trivial zeros, we may directly determine $H_3^T(J)$ as a quasi-polynomial, implied by Theorem 1; that is, $H_3^T(J)$ is a polynomial on each remainder class modulo 12.

\begin{theorem}  Let $H_3^T(J)$ denote the number of trivial zeros of $C(M)$ with magic number $J$. For $J\ge 0,$  
\[
H^T_3(J)=\quad
 \begin{array}{l@{\qquad}l@{\qquad}l} 
 0 & J\ even\\[1mm]
\frac34 (J-1)(J-2) &  J=1, 5\ (mod\ 12)\\[1mm]
\frac34 (J-1)(J-2) +4 & J=9\ (mod\ 12)\\[1mm]
\frac34 (J+1)(J-4) & J=7, 11\ (mod\ 12)\\[1mm]
\frac34 (J+1)(J-4) +4 & J=3\ (mod\ 12).
\end{array}
\]
\end{theorem}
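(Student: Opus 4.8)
The plan is to read the quasi-polynomial off the generating function of Theorem 1. The denominator $(1-x^4)^2(1-x^6)$ vanishes only at fourth and sixth roots of unity, and $\mathrm{lcm}(4,6)=12$, so by the standard theory of rational generating functions (cf. \cite{St2}) the coefficients $H^T_3(J)$ agree with a polynomial in $J$ on each residue class modulo $12$. The poles of largest order sit at $x=1$ and $x=-1$, where $(1-x^4)^2$ contributes order $2$ and $(1-x^6)$ contributes order $1$; the order is therefore $3$, so the quasi-polynomial has degree $2$. Thus $H^T_3(J)$ is a priori a period-$12$, degree-$2$ quasi-polynomial, exactly the shape asserted.

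Next I would cut the work by parity. By definition a trivial zero requires $J$ odd, so $H^T_3(J)=0$ for even $J$; this is visible in the generating function as well, since its numerator $x^3(1+9x^2+16x^4+27x^6+19x^8)$ is supported on odd powers while the denominator involves only even powers of $x$. Setting $y=x^2$ and $J=2M+1$ then gives
\[
\sum_{M\ge 0} H^T_3(2M+1)\,y^M \;=\; \frac{y\,(1+9y+16y^2+27y^3+19y^4)}{(1-y^2)^2(1-y^3)},
\]
so the odd values form a period-$6$, degree-$2$ quasi-polynomial in $M$, with the six residues of $M$ modulo $6$ matching $J\equiv 1,3,5,7,9,11\ (\mathrm{mod}\ 12)$.

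To finish, I would first check that the claimed formula, rewritten in $M$, is itself a period-$6$, degree-$2$ quasi-polynomial: substituting $J=2M+1$ yields $\frac{3}{4}(J-1)(J-2)=3M^2-\frac{3}{2}M$ and $\frac{3}{4}(J+1)(J-4)=3M^2-\frac{3}{2}M-\frac{9}{2}$, so the quadratic and linear parts are the same in every class and only the constant term depends on the residue. Since two period-$6$ quasi-polynomials of degree at most $2$ coincide as soon as they agree on three consecutive periods, it suffices to compare the two sides for $M=0,1,\dots,17$, i.e. $J=1,3,\dots,35$; the required values of the left-hand side are read directly from the series expansion displayed above. As a sanity check, that expansion begins $0,1,9,18,46,63,99,136,\dots$ at $J=1,3,5,7,9,11,13,15$, in agreement with the formula, and the common leading term is confirmed by writing $N(y)=1+9y+16y^2+27y^3+19y^4$ and $F(y)$ for the right-hand side of the display, so that $\lim_{y\to1}(1-y)^3F(y)=\frac{1\cdot N(1)}{(1+1)^2(1+1+1)}=\frac{72}{12}=6$, which produces the leading $3M^2$.

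The work is essentially bookkeeping rather than a conceptual obstruction; the one place demanding care is pinning down the four distinct constant terms across the residue classes. In the generating-function route this means carrying the expansion far enough to cover three full periods, or, for a closed form, performing the partial-fraction decomposition over the poles at $y=-1$ and the primitive cube roots of unity and collecting the residue-dependent constants. The alternative, a direct enumeration from the three-part triangle description of Section 8—the alternating $1$'s on all-odd top lines, the bisector progressions, and the central block of $2$'s and $3$'s—produces the same constants but forces one to treat the central case $J=3a$ separately, as flagged there and in \cite{Do2}; I would use it only as an independent check.
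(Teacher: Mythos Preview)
Your argument is correct, and it takes a genuinely different route from the paper's own proof. The paper proves Theorem~2 by summing the three geometric contributions described in Section~8 directly: the pattern of $1$'s on all-odd top lines gives $\tfrac{J^2-1}{8}$, the central block of $2$'s gives $\tfrac{(J+1)(J+3)}{4}$, and the three bisectors give $\tfrac{3(J^2-1)}{8}$ with a linear correction when $J=3a$; the leading term $\tfrac34 J^2$ falls out, and the residual constants are then fixed by a handful of initial values. Your approach instead treats Theorem~1 as already established and reads the quasi-polynomial structure off the rational generating function, reducing everything to a finite check of coefficients.

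What each buys: the paper's route is self-contained in the sense that it does not rely on the orbit-counting of Theorem~1, and it explains \emph{where} each piece of the quasi-polynomial comes from geometrically. Your route is cleaner once Theorem~1 is in hand---the period $12$ and degree $2$ are immediate from the pole structure, the parity reduction to a period-$6$ problem in $M$ is elegant, and no case analysis of the $J=3a$ bisector correction is needed. One small efficiency you could claim explicitly: since you have already shown that the quadratic and linear parts of the claimed formula are $3M^{2}-\tfrac32 M$ in every residue class, and your residue computation at $y=1$ confirms the leading $3M^{2}$ on the generating-function side, a single value in each of the six classes (your listed $J=1,3,5,7,9,11$ already suffice, once the linear coefficient is also matched) would close the argument without going out to $M=17$.
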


\begin{proof}  Summing over arithmetic progressions, the contributions of 1 and 2 from the first and third bullet points are given by 
\[ \frac{J^2-1}8\quad{\rm and}\quad  \frac{(J+1)(J+3)}4,\]
 respectively.  The bisector counts are given by
 \[  \frac{3(J^2-1)}8 \]
 unless $J=3a$, in which case this count is off by a linear error.  The total bisector error in the full count, which is dependent on the parity of the bisector length and multiplicity 3, is linear with quasi-period 12.  Thus, for each odd residue class, the leading term is $\frac34 J^2,$ and values for $1\le J< 24$ determine the remaining coefficients.
$\qed$
\end{proof}

We may further decompose the triangle of trivial zeros into trivial orbits.   Again see the next section for the complete case when $J=15.$  In this case, non-zero entries in the triangle may take values 1, 2, or 6.   A sextet of trivial zeros occur at the midpoint $(a, a, a)$ only if $J=3a$; these six zeros are equidistant from the center along the three diagonals in the corresponding hexagon.  Doublets of trivial zeros occur in two cases.  If the zero doublet occurs on a bisector,  a representative  has matching columns; otherwise, another row must match to the first row.

In general, for a trivial orbit of size 1, we must have $J=3a,$ and the triangle for this unique orbit has a single 1 in the middle entry.   For orbit size 9 (resp. $36$), the convex hull of the non-zero entries in the triangle form an equilateral triangle (resp. hexagon) with a 1 at each vertex and a 2 at the midpoint of each edge.  

The hexagon case is filled out with three parallelograms based at the midpoints of the edges and inside the triangle formed by these midpoints of the long edges. The entries adjacent to the midpoints are given by intersecting the equilateral triangles formed from the midpoints; these entries always have value 2. The remaining vertices may meet at the center with value 6 only if $J=3a;$ otherwise, these vertices have value 2.

\section{Example:  Triangles for $J=15$}
\label{sec:9}

To illustrate the various portraits described in the previous section, the case of $J=15$ is large enough to exhibit most of the phenomena described in the previous section. Here we have eight orbits, seven trivial and one non-trivial, and trivial orbits of size 36 that both contain and do not contain the center.   Fig. 6 displays counts for all zeros and all trivial zeros.  Figures 7 through 9 give the trivial orbits, and Fig. 10 shows the only non-trivial orbit.   We omit the triangle for the trivial orbit with top line $(5, 5, 5),$ which consists of a single 1 at the center of the triangle.

Finally, we recall the 36-pointed star of Theorem 2 in \cite{Do2}. In the case of $J=15,$ this star is composed of 73 trivial zeros, represented as the union of trivial orbits corresponding to top lines $(1, 5, 9)$, $(3, 5, 7),$ and $(5, 5, 5).$ See Fig. 9 below.

\begin{figure}
\includegraphics[scale=.82]{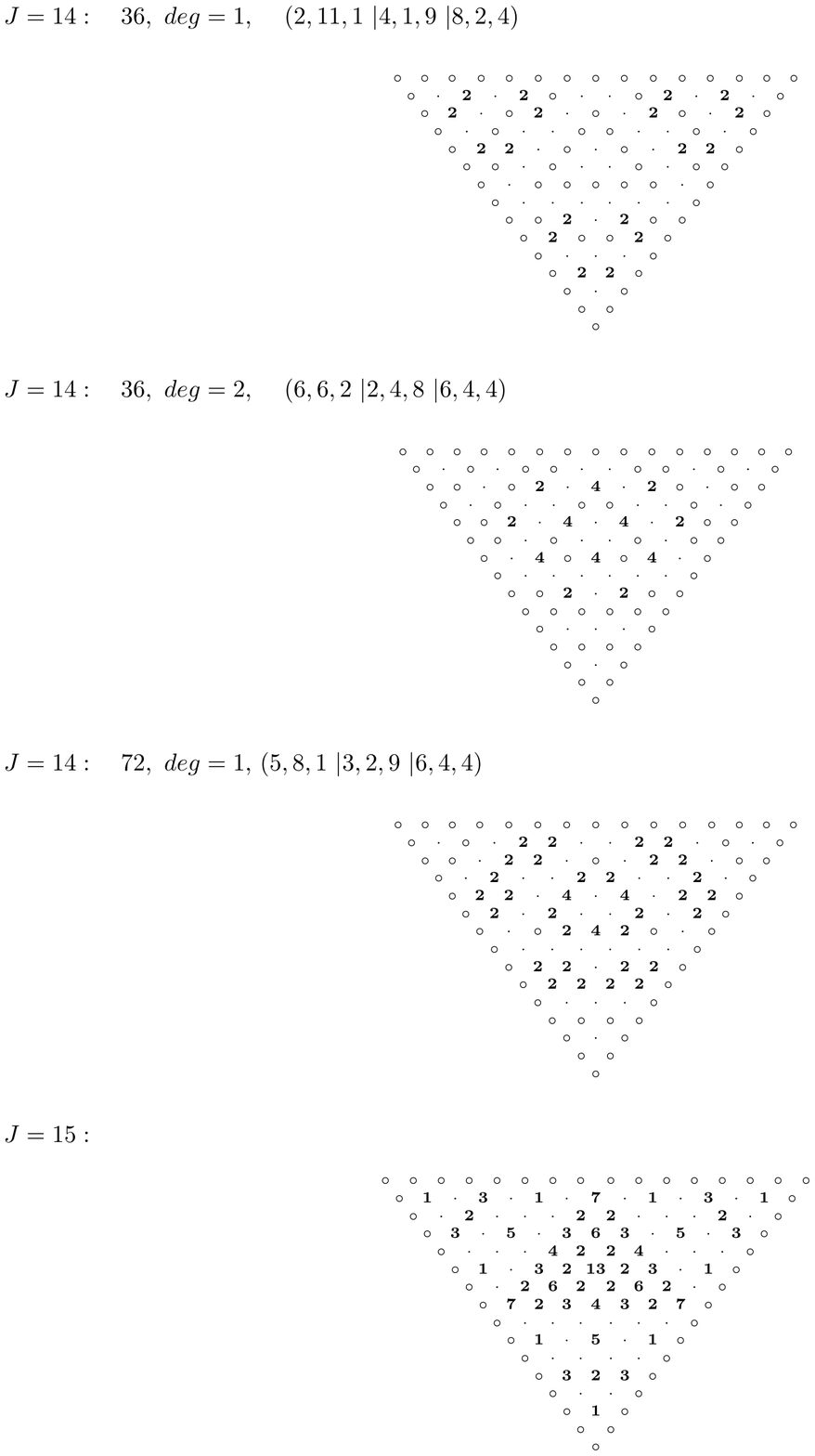}
\hspace{7pt}
\includegraphics[scale=.82]{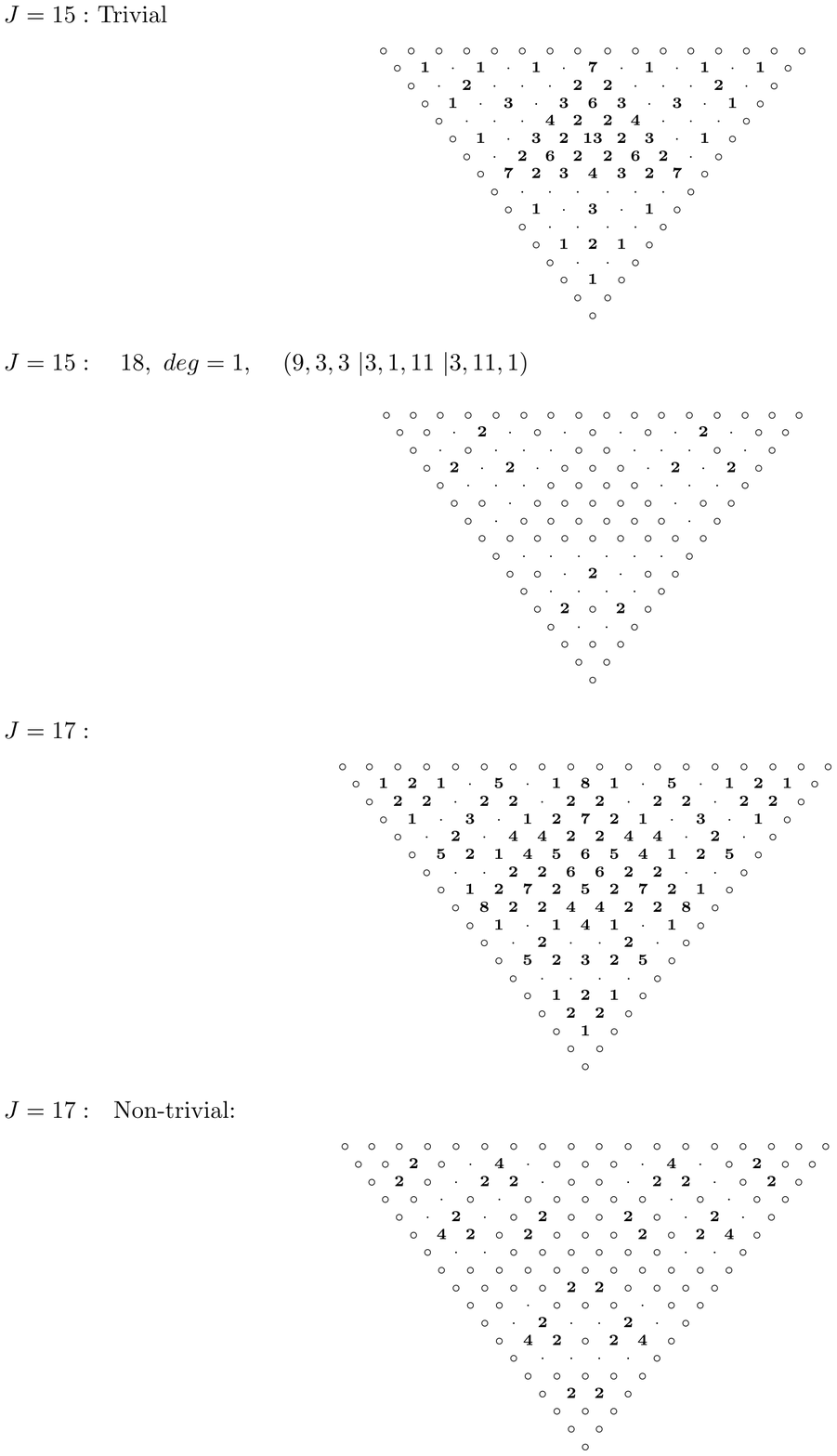}
\caption{\ $J=15$: {\bf a} All zeros  and {\bf b} trivial zeros.}
\label{Figure 6}
\end{figure}

\[
\begin{array}{r@{}l@{\qquad}l}
Orbits: &\quad  \left|\left|\begin{array}{ccc}
\,  5 & \ 5\ &  5\ \\
\, 5 &  5 &  5 \ \\
\,  5   & \ 5\ & 5\ \\
\end{array}\right|\right|,\  \left|\left|\begin{array}{ccc}
  13\, &  1\ & 1\ \\
  1 &  7\  &  7\  \\
  1  &  7\ & 7\  \\
\end{array}\right|\right|, \left|\left|\begin{array}{ccc}
\, 9\ & 3\ &  3\ \\
\, 3\ &  6\ & 6\ \\
\, 3\  &  6\ &  6\  \\
\end{array}\right|\right|,  \left|\left|\begin{array}{ccc}
\, 1 & \ 7 \ & 7 \\
\, 7 & \ 4\  & 4 \\
 \, 7   & \ 4\ & 4 \\
\end{array}\right|\right|,\  \left|\left|\begin{array}{ccc}
 1 & \ 3 \, & 11 \\
\, 7 & \ 6\, & 2  \\
\, 7  & \ 6\, & 2 \\
\end{array}\right|\right|,\\[5mm]
 & \quad  \left|\left|\begin{array}{ccc}
\ 1 \ & 5\ &  9\ \\
 7 & 5\ &  3\  \\
  7   & 5\ &  3\  \\
\end{array}\right|\right|,\  \left|\left|\begin{array}{ccc}
\  3 & \ 5\ &  7\ \\
\ 6 & \ 5\  & 4\  \\
\   6 & \ 5\ & 4\  \\
\end{array}\right|\right|,  \left|\left|\begin{array}{ccc}
 \, 9 &\, 3 & 3 \\
 \, 3 &\, 11  &  1  \\
  \, 3  &\, 1 & 11  \\
\end{array}\right|\right|.
\end{array}
\]

\begin{figure}
\includegraphics[scale=.8]{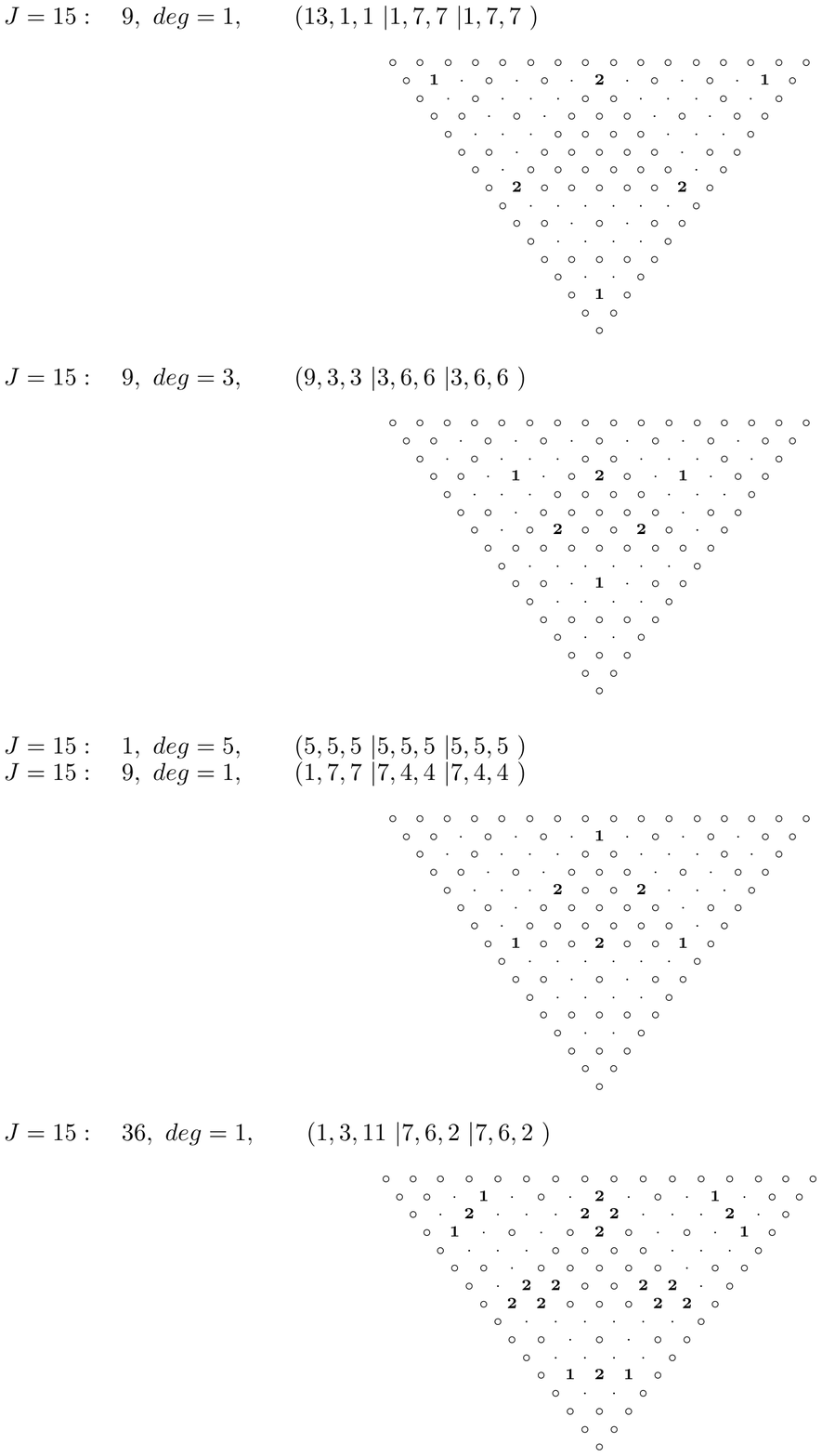}
\hspace{7pt}
\includegraphics[scale=.8]{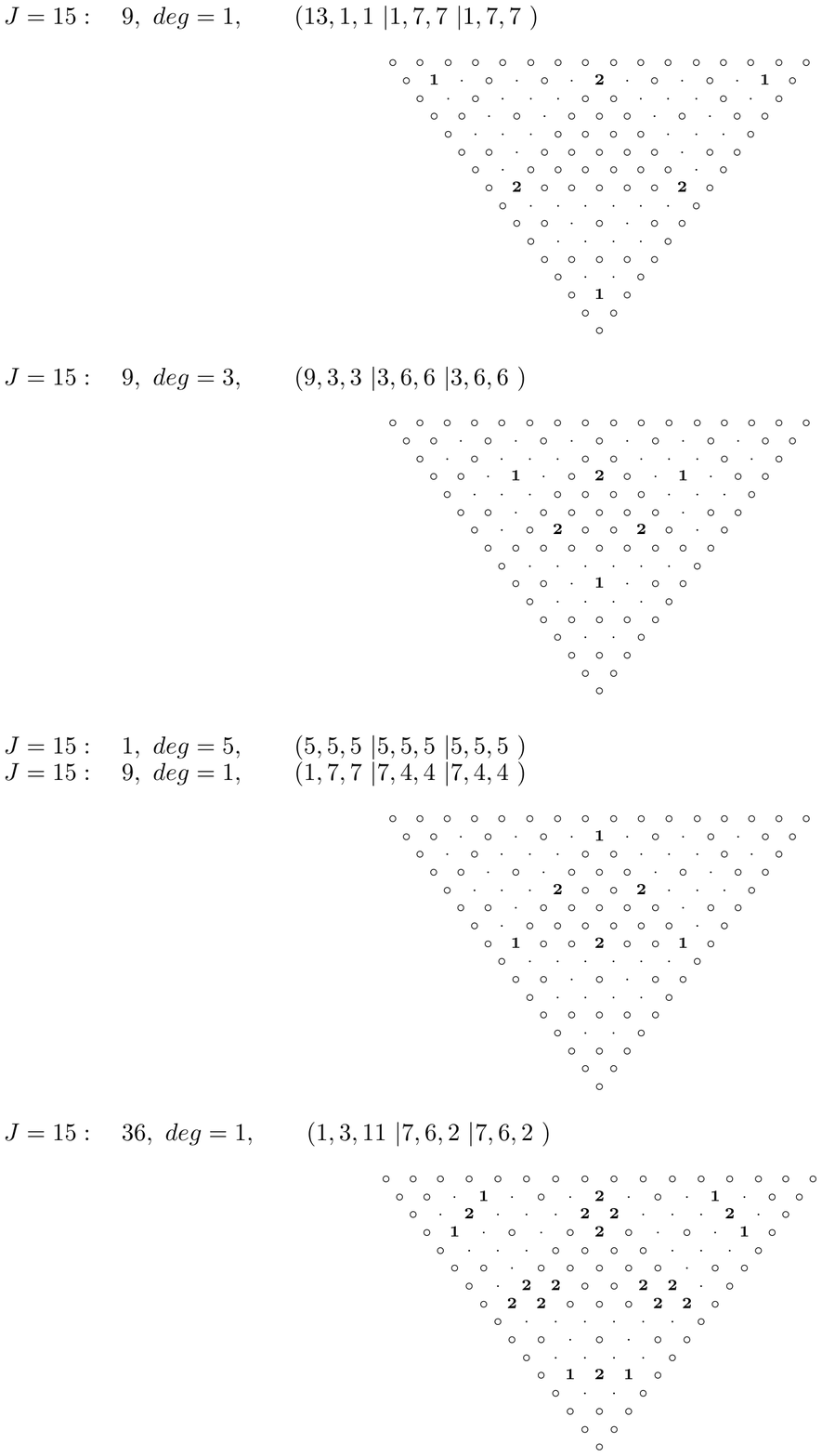}
\caption{\ $J=15$: Top lines {\bf a} $(13, 1, 1)$ and  {\bf b} $(9, 3, 3)$.}
\label{Figure 7}
\end{figure}

\begin{figure}
\includegraphics[scale=.8]{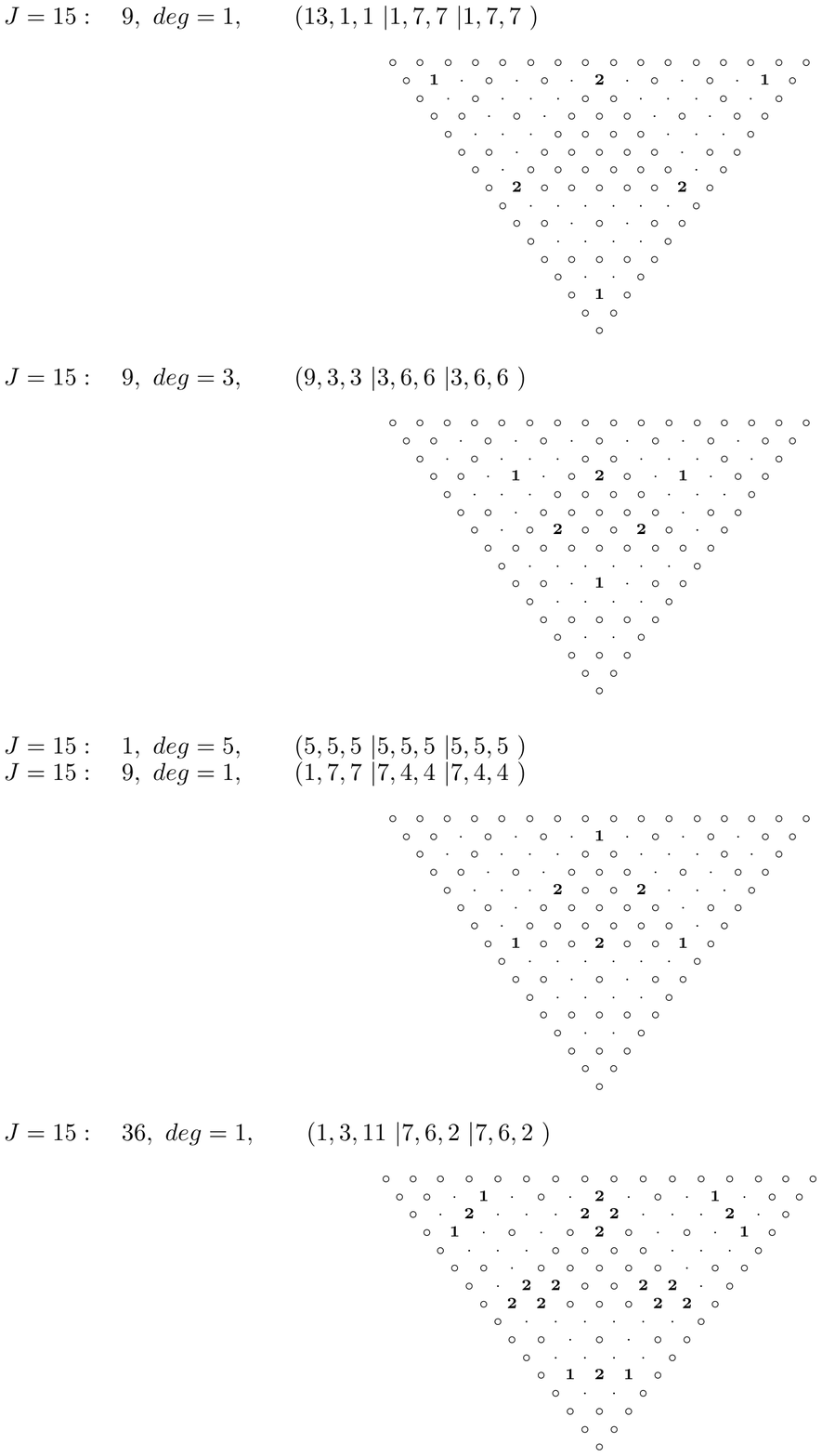}
\hspace{7pt}
\includegraphics[scale=.8]{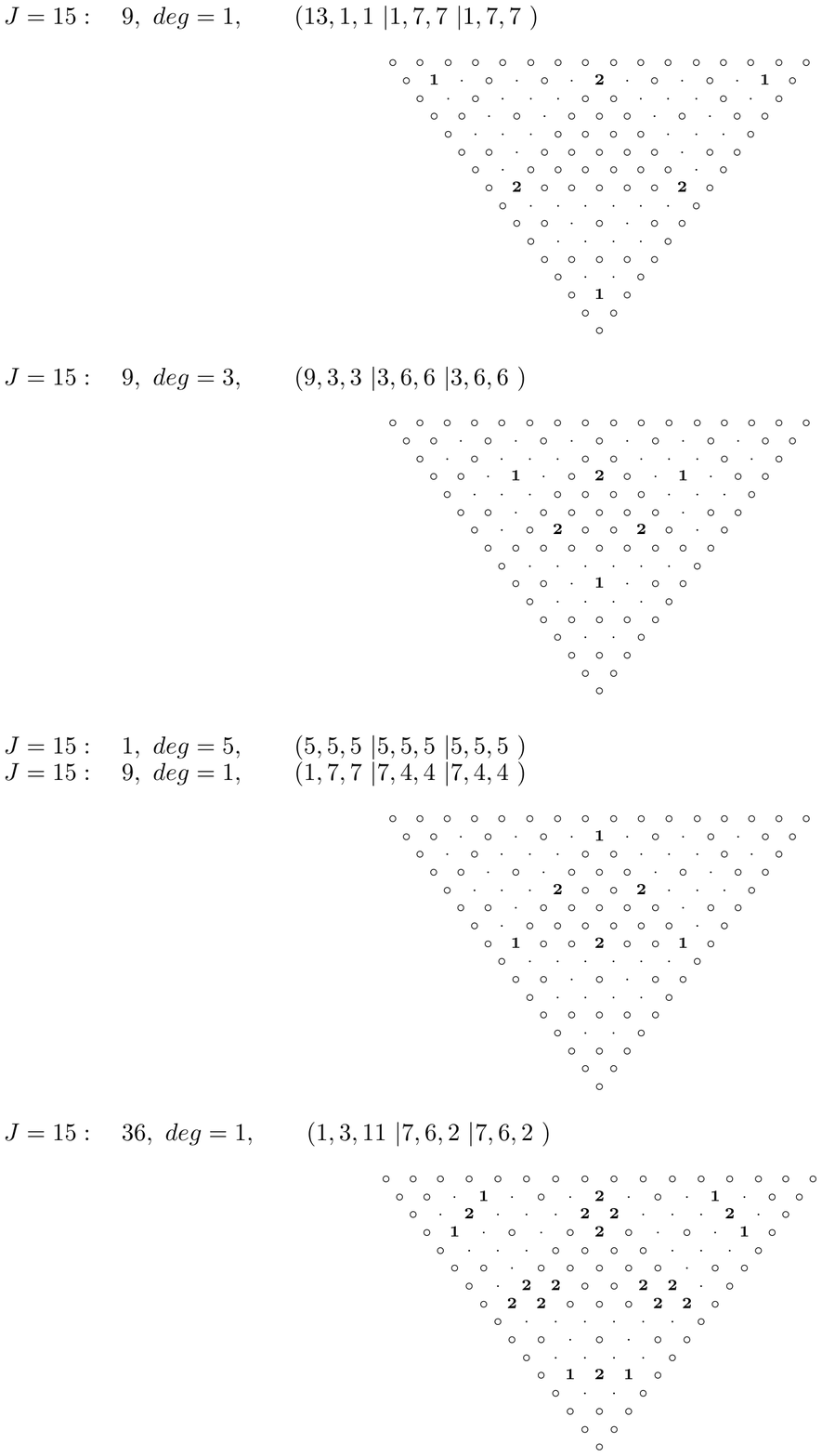}
\caption{\ $J=15$:  Top lines {\bf a} $(1, 7, 7)$ and  {\bf b} $(1, 3, 11)$.}
\label{Figure 8}
\end{figure}

\begin{figure}
\includegraphics[scale=.8]{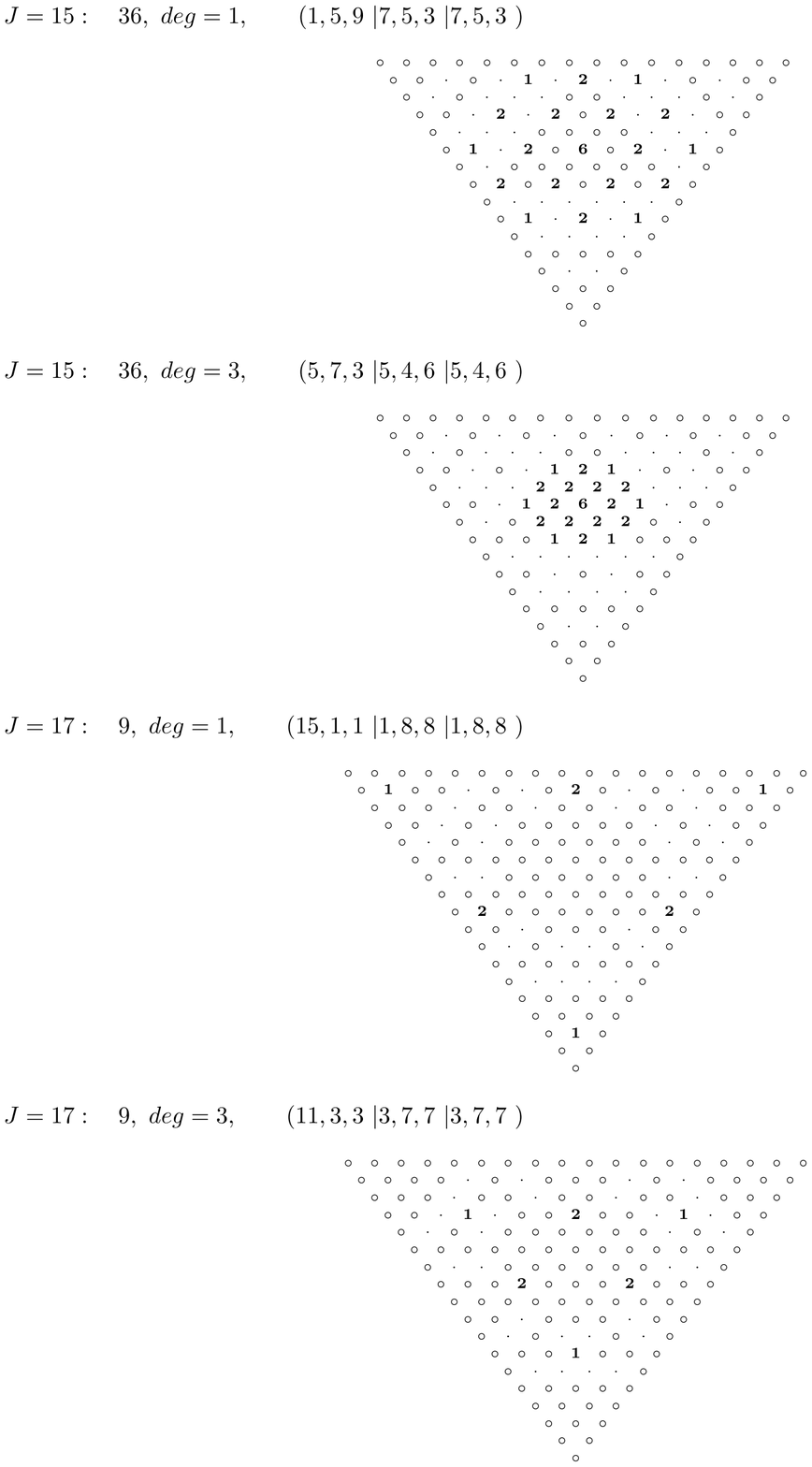}
\hspace{7pt}
\includegraphics[scale=.8]{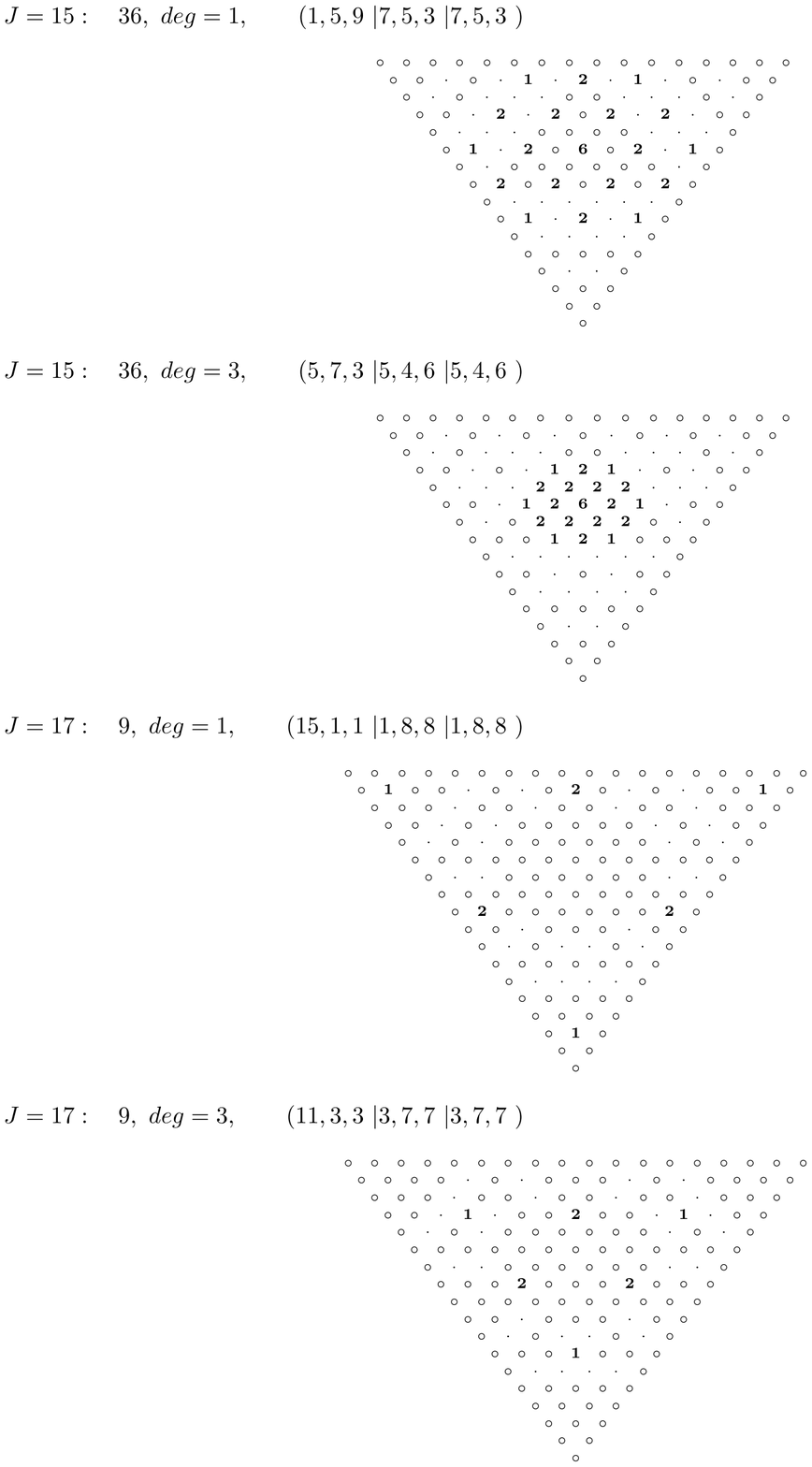}
\caption{\ $J=15$: Top lines {\bf a}  $(1, 5, 9)$  and  {\bf b}  $(3, 5, 7)$.}
\label{Figure 9}
\end{figure}

\begin{figure}
\includegraphics[scale=.8]{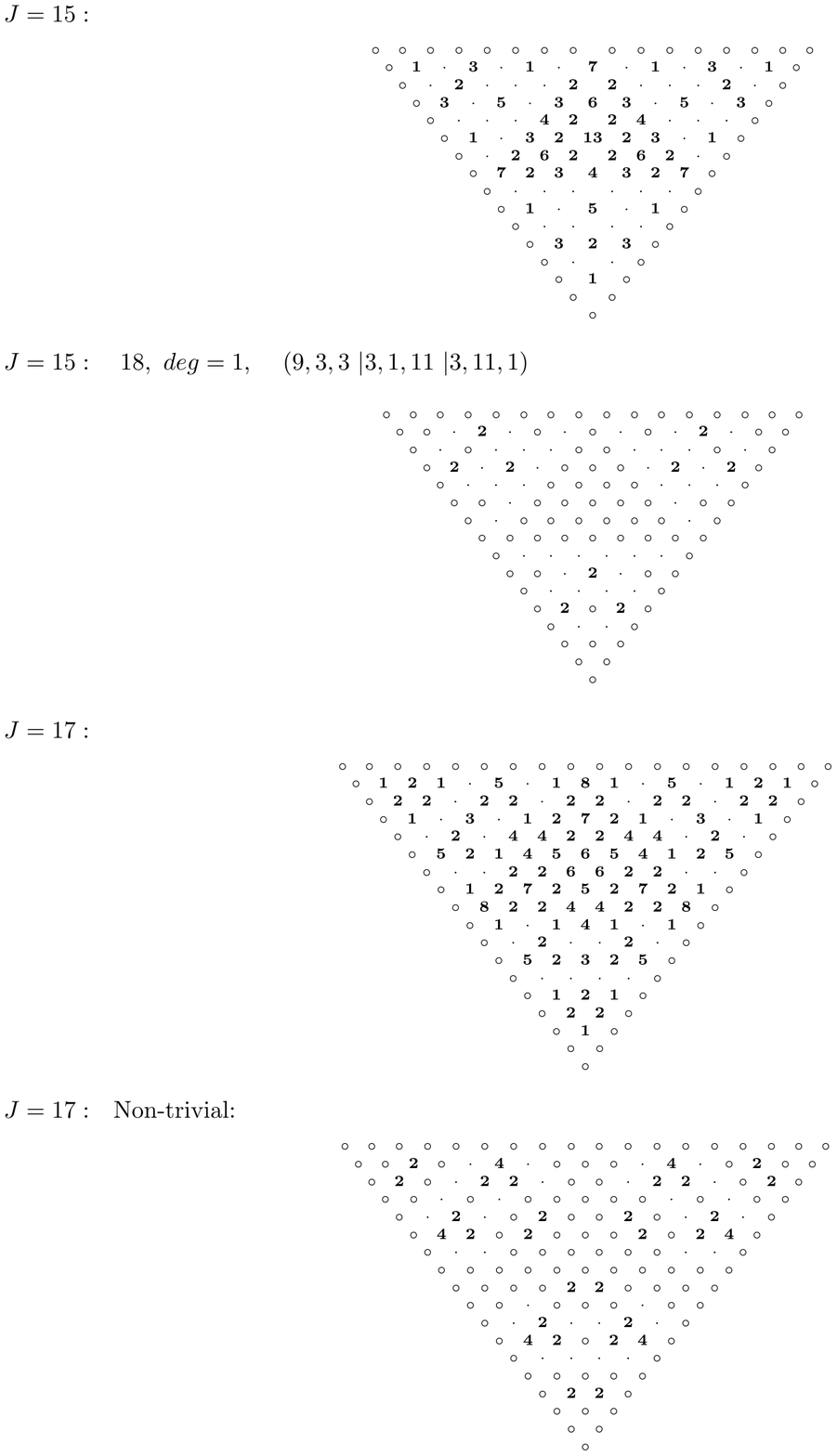}
\caption{\ $J=15$:  Non-trivial orbit with top line $(9, 3, 3)$.}
\label{Figure 10}
\end{figure}

%
%

\begin{thebibliography}{99.}%
%
%
\bibitem{BZ}
	Beck, M., 	Zaslavsky, T.:
	Six little squares and how their numbers grow.
	J. Integer Seq. \textbf{13}, 1--45 (2010)
\bibitem{Bo}
        B{\' o}na, M.:  
        A new proof of the formula for the number of $3\times 3$ magic squares. 
        Math. Mag. \textbf{70}, 201--203 (1997)	
\bibitem{Do}
	Donley, R.W., Jr., Kim, W.G.: 
	A rational theory of Clebsch-Gordan coefficients.  In:  Representation theory and harmonic analysis on symmetric spaces.  American Mathematical Society, Providence.
	Contemp. Math.,  Vol. {714},  115--130 (2018)	
\bibitem{Do2}
	Donley, R. W., Jr.:
	Central values of Clebsch-Gordan coefficients.	In: Combinatorial and additive number theory III. Springer, New York.
	Springer Proc. in Math. and Stat., Vol. 297, 26 pp. (2019)
\bibitem{Do3}
	Donley, R. W., Jr.:
	Binomial arrays and generalized Vandermonde identities.
	Preprint, 28 pp (2019)		
\bibitem{Lo2}
	Louck, J. D.:
	Applications of unitary symmetry and combinatorics.
	World Scientific, Teaneck, N.J. (2011)
\bibitem{PMM}
	MacMahon, P. A.:
	Combinatory Analysis, Vols. 1 and 2. 
	Cambridge University Press, (1916); reprinted by Chelsea, New York (1960) and Dover, New York (2004)	
\bibitem{RVR}
	Rao, K. S., Van der Jeugt, J., Raynal, J., Jagannathan, R., Rajeswari, V.:
	Group theoretic basis for the terminating ${}_3F_2(1)$ series.
	J. Phys. A \textbf{25}, 861--876 (1992)	
\bibitem{RRV}
	Raynal, J., Van der Jeugt, J., Rao, K.S., Rajeswari, V.:
	On the zeros of 3j coefficients: Polynomial degree versus recurrence order.
	J. Phys. A \textbf{26}, 2607--2623 (1993)	
\bibitem{Slo}
  	Sloane, N. J. A.:
	The On-Line Encyclopedia of Integer Sequences. Published electronically at http://oeis.org/.	
\bibitem{St1}
	Stanley, R.:
	Combinatorics and commutative algebra. Second edition.	
	Birkh\"auser, Boston (1996)
\bibitem{St2}
	Stanley, R.:
	Enumerative combinatorics, Vol. 1. Second edition.
	Cambridge University Press, New York (2011)	
%
\end{thebibliography}
%

\end{document}